\newcommand{\Path}{}
\newcommand{\Diagrams}{}
\renewcommand{\Chain}{\text{Chain}}
\renewcommand{\Cone}{\Psi}
\begin{document}
	\title{Homology and homotopy for arbitrary categories}
	\author{Suddhasattwa Das\footnotemark[1]}
	\footnotetext[1]{Department of Mathematics and Statistics, Texas Tech University, Texas, USA}
	\date{\today}
	\maketitle
	
	\begin{abstract} 
		One of the prime motivation for topology was Homotopy theory, which captures the general idea of a continuous transformation between two entities, which may be spaces or maps. In later decades, an algebraic formulation of topology was discovered with the development of Homology theory. Some of the deepest results in topology are about the connections between Homotopy and Homology. These results are proved using intricate constructions. This paper re-proves these connections via an axiomatic approach that provides a common ground for homotopy and homology in arbitrary categories. One of the main contributions is a re-interpretation of convexity as an extrinsic rather than intrinsic property. All the axioms and results are applicable for the familiar context of topological spaces. At the same time it provides a complete framework for an algebraic characterization of objects in a general category, which also preserves a notion of Homotopy.
	\end{abstract}
	\begin{keywords} Category, Homology, Homotopy, Pushouts \end{keywords}
	\begin{AMS}	46E27, 18A25, 18A30, 18G35, 18F60 \end{AMS}
	\section{Introduction} \label{sec:intro}
	
	\begin{figure}[!ht]
		\centering
			\begin{tikzpicture}[scale=1.0, transform shape, framed, background rectangle/.style={double, ultra thick, draw=gray, rounded corners}]
	\node [style={rect2}] (1) at (0, -0.2\rowA) {Simplicial homology $\Homology : \sSet \to \AbelCat$  };
	\node [style={rect2}] (2) at (\columnA, -0.2\rowA) {Co-simplicial $\calC$ object $F:\Delta \to \calC$};
	\node  (3) at (2\columnA, -0.2\rowA) {$\begin{tikzcd} F(0) \arrow[rr, bend left=30, "F d_{1,0}"] \arrow[rr, bend right=30, "F d_{1,1}"'] && F(1)   \end{tikzcd}$};
	\node [style={rect3}] (4) at (\columnA, 1.0\rowA) {Nerve construction $\Nerve_F : \calC \to \sSet$};
	\node [style={rect4}] (5) at (0\columnA, 2\rowA) {Homology for category $\calC$ :  $\Homology_{F} : \calC \to \AbelCat$};
	\node [style={rect3}] (6) at (2\columnA, 1.0\rowA) {Concept of homotopy in $\calC$};
	\node [style={rect4}] (7) at (\columnA, 2\rowA) {Homotopy equivalence};
	\node [style={rect2}] (8) at (2\columnA, 2\rowA) {Axioms \ref{A:C}, \ref{A:1_0_cell}, \ref{A:swap} and \ref{A:F1_join}};
	\node [style={rect5}] (9) at (\columnA, 3\rowA) {Homotopy invariance of homology};
	\node [style={rect2}] (10) at (2\columnA, 3\rowA) {Axiom \ref{A:convex}};
	\draw[-to] (1) to (5);
	\draw[-to] (4) to (5);
	\draw[-to] (2) to (4);
	\draw[-to] (2) to (3);
	\draw[-to] (3) to (6);
	\draw[-to] (6) to (7);
	\draw[-to] (8) to (7);
	\draw[-to] (7) to (9);
	\draw[-to] (5) to (9);
	\draw[-to] (10) to (9);
\end{tikzpicture}
		\caption{Outline of the theory. The paper presents a simple axiomatic approach to both homotopy and homology in a general category $\calC$. A collection of assumptions on $\calC$ not only help generalize these concepts but also guarantees that the homology preserves homotopy invariance. The chart above presents various assumptions and concepts, along with their logical dependence. The independent notions are in white boxes. This includes Simplicial homology, a means of converting the combinatorial structure of a simplicial set into an Abelian group. Homology is the confluence of simplicial homology and the nerve construction, as shown in detail in \eqref{eqn:CatHmlgy}. The nerve construction \eqref{eqn:def:Nerve} is an outcome of an arbitrary functor $F$ as in \eqref{eqn:functorF}. A bare minimum notion of Homotopy can be built from the image of the first two objects of the simplex category $\Delta$ under functor $F$. To attain the more developed notion of Homotopy equivalence one needs further axioms as indicated. Figure \ref{fig:hmtpy_outline} presents a detailed outline of this property. Also see Figure \ref{fig:hmlgy_hmtpy} for a detailed outline of the derivation of homotopy invariance from more fundamental properties arising due to our axioms.}
		\label{fig:paper_outline}
	\end{figure} 
	
	The two pillars of modern topology are Homotopy theory and Homology theory. The former studies the continuous deformation of maps and spaces, and the various equivalences they create. On the other hand, homology provides a means of embedding the arrangement of topological spaces and continuous maps into a system of Abelian groups and group homomorphisms between them. The axiomatic approaches for homotopy \cite[e.g.]{Quillen2006hmtpy, Riehl_homotopy_2014} and Homology \cite[]{Segal1968classify, Kan1957css, Kan1958fnctrs, Andre1970hmlgy} have mostly been independent. Subsequently, a number of axiomatic frameworks have been suggested to unify these two theories. The theory of model categories \cite[e.g.]{Quillen2006hmtpy, DwyerSpalinski1995homotopy} extends some notions from topology by formulating three abstract notions of morphisms - fibrations, cofibrations and weak equivalences. Another approach is that of $\infty$-categories \cite[e.g.]{Cisinski2019higher}, which is a reformulation of category theory itself so that notions of homotopy gets ingrained into the notions. Some other innovations in this field can be found in \cite[e.g.]{GoerssJardine2009, DwyerKan1984singular}. 
	
	Our goal is also to present a general axiomatic framework to establish the notions of homotopy and Homology, and achieve the homology invariance with respect to homotopy equivalences. The axioms to be presented are simpler, verifiable, and have intuitive interpretations. They do not require the heavy structural assumptions of model or $\infty$-categories. Rather they share some similarities with classical \textit{generalized homology theory} \cite{EilenbergSteenrod2015topo} which interprets homology as a sequence of functors $H_n$ satisfying a set of five axioms. Some of the most important generalized homology theories arise from the study of \textit{stable homotopy theory} \cite{adams1974stable} and are usually represented by a construct call \textit{spectra}. Homology is thus interpreted fundamentally as a form of stable homotopy. Our axiomatization involves only two of the Eilenberg Steenrod axioms - homotopy and dimension axioms. Moreover, they occur not as axioms but as consequences of even more simpler assumptions on the category.
	
	The need for Homotopy and Homology theories for general categories is still relevant due to the increasing categorification of various branches of Mathematics. Categories and category theory provide a panoramic view of many of the various branches of mathematics, such as Dynamical systems theory \cite[]{MossPerrone2022ergdc, Das2024slice, Das2023CatEntropy}, Measure theory \cite[]{Leinster_integration_2020, Panangaden1999Markov}, Algorithms \cite[]{Yanofsky2011algo, Yanofsky2022theoretical} or Game theory \cite[]{GhaniHEdges2006game, GhaniEtAl2018iter}. In fact, this discipline arose from a systematic study of topological objects and continuous map, and its connections with Algebra. The concepts of Homology and Homotopy were respectively established as functor from the category of topological spaces into the category of Abelian groups, and from the category of pointed topological spaces into the category of general Groups. The concept of a functor provides a concise description of a mapping that also preserves the numerous relations between two arrangements. Readers can find the basic definitions of category theory in standard sources such as \cite{Riehl_context_2017, Maclane2013}. Our axiomatization starts with the combinatorial notion of \emph{simplices}.
	
	\paragraph{Simplex category} The simplex category $\Delta$ \cite{mcinnes_umap_2018, Riehl2011sset} is a foundational structure in algebraic topology and category theory, primarily serving as a combinatorial blueprint for building and studying simplicial objects. It has as objects the non-negative integers $\num_0 :- \braces{0, 1,2,\ldots }$. Each such integer $n$ is meant to represent the ordered sets $[n] := \{0, \ldots, n\}$. The morphisms are given by
	\[ \Hom_{\Delta} (m;n) := \braces{ \mbox{ Order preserving maps } \phi:[m] \to [n] } . \]
	The simplex category $\Delta$ is the most concise way of encoding combinatorial structures in other categories \cite{Riehl2011sset, Steiner2007omega, Grodal2002higher}. The goal of the paper is to present how a simple functor 
	\begin{equation} \label{eqn:functorF}
		F : \Delta \to \calC
	\end{equation}
	mapping the simplex category $\Delta$ into an arbitrary category $\calC$, leads to notions of Homotopy and Homology for the category $\calC$. The functor $F$ creates an image of $\Delta$ within $F$. The objects $\SetDef{F(n)}{n\in\num_0}$ will be called \emph{cells}. They shall serve as the basic building blocks of homology and homotopy. See Figure \ref{fig:paper_outline} for a concise outline of the paper.
	
	\paragraph{Topological simplex} One of the main motivating examples is the case when $\calC = \Topo$, the category of topological spaces and continuous maps. In that case $F$ is often taken to be \emph{standard topological simplex}
	\begin{equation} \label{eqn:def:StndrdTopoSmplx}
		F_{stndrd, Topo} : \Delta \to \Topo, \quad F_{stndrd, Topo} (n) = \mbox{ convex span of } \braces{ e^{(n+1)}_0, \ldots, e^{(n+1)}_n } ,
	\end{equation}
	where $e^{(n+1)}_0, \ldots, e^{(n+1)}_n$ are $n+1$ independent eigenvectors in $\real^{n+1}$. The resulting homology is the singular homology for topological spaces. Note that the $n$-th cell of this functor is isomorphic to the $n$-dimensional closed disk. Our approach is not tied to to the specific example of \eqref{eqn:def:StndrdTopoSmplx}. We shall identify and isolate structural properties of the general functor \eqref{eqn:functorF} and recreate the homology and homotopy. The specific functor in \eqref{eqn:def:StndrdTopoSmplx} will serve as an useful example to verify our axioms with.
	
	Our first two axioms will be on the basic elements of $F$ :
	
	\begin{Axiom} \label{A:C}
		The category $\calC$ has a terminal object $1_{\calC}$, and has finite products.
	\end{Axiom}
	
	and
	
	\begin{Axiom} \label{A:1_0_cell}
		The functor $F$ maps $[0]$ into $1_\calC$ from Axiom \ref{A:C}.
	\end{Axiom}
	
	Axiom \ref{A:1_0_cell} simply states that the 0-th cell is the terminal object of $\calC$. Note that for the example of \eqref{eqn:def:StndrdTopoSmplx}, $1_{\calC}$ is the one-point topological space, which is also the $0$-th cell. The Cartesian products of spaces in $\Topo$ also coincides with categorical products.
	
	\paragraph{Generating morphisms} In general, it is not easy to make an explicit construction of a functor of the form \eqref{eqn:functorF}, as the morphisms incoming at $n$ or outgoing at $n$ increase exponentially with $n$. The task becomes easier if one concentrates only on a special collection of morphisms known as \emph{face maps} :
	\begin{equation} \label{eqn:def:face}
		0\leq i\leq n \;:\; \Shobuj{d_{n,i}} : [n-1] \to [n], \, \quad j \mapsto \begin{cases}
			j & \mbox{ if } j<i	\\
			j+1 & \mbox{ if } j\geq i	
		\end{cases}.
	\end{equation}
	and \emph{degeneracy} maps :
	\begin{equation} \label{eqn:def:degeneracy}
		0\leq i\leq n \;:\;  \Shobuj{s_{n,i}} : [n+1] \to [n], \quad j \mapsto \begin{cases}
			j & \mbox{ if } j \leq i	\\
			j-1 & \mbox{ if } j > i	
		\end{cases}.
	\end{equation}
	To put more concisely, the $i$-th face map $d_{n,i}$ is the unique map which skips the element $i$ in $[n+1]$, and the $i$-th degeneracy map $s_{n,i}$ is the unique surjective map such that the pre-image of element $i$ has two elements. The face and degeneracy maps generate every morphism $\phi:[m]\to [n]$ in $\Delta$. The obey the following set of identities :
	\begin{equation} \label{eqn:smplc_id}
		\begin{split}
	d_{n+1,i} d_{n,j} = d_{n+1,j+1} d_{n,i} & \mbox{ if } i<j \\
	s_{n-1,j} d_{n,i} = d_{n-1,i} s_{n-2,j-1} & \mbox{ if } i<j \\
	s_{n-1,j} d_{n,j} = \Id_{[n-1]} & \\
	s_{n-1,j} d_{n,j+1} = \Id_{[n-1]} \\
	s_{n-1,j} d_{n,i} = d_{n-1,i-1} s_{n-2,j} & \mbox{ if } i>j+1 \\
	s_{n-1,j} s_{n,i} = s_{n-1,i} s_{n,j+1} & \mbox{ if } i\leq j 
\end{split} 
	\end{equation}
	which are called the \emph{simplicial identities}. Equations \eqref{eqn:def:face}, \eqref{eqn:def:degeneracy} and \eqref{eqn:smplc_id} provide a sufficient set of morphisms and composition relations that generate all the morphisms of $\Delta$. This means that given a collection of morphisms \eqref{eqn:def:face} and \eqref{eqn:def:degeneracy} which satisfy \eqref{eqn:smplc_id}, any morphism in $\Delta$ may be expressed as a composition of different $d_{n,i}$ and $s_{n',i'}$-s. Thus when trying to construct a functor $F$ as in \eqref{eqn:functorF}, one only needs to specify the objects $\SetDef{ F(n) }{ n\in \num_0 }$, the action of $F$ on the face and degeneracy morphisms, and ensure that $F$ preserves the simplicial identities of \eqref{eqn:smplc_id}. 
	
	The next axiom we need is about the $1$-th cell :
	
	\begin{Axiom} \label{A:swap}
		There is an isomorphism $\text{swap} : F(1) \to F(1)$ such that the following commutation holds with boundary maps :
		\begin{equation} \label{eqn:def:hmtpy:1}
			\begin{tikzcd}
				& & F(1) \arrow[d, "\text{swap}", bend left=49] \\
				1_{\calC} \arrow[rr, "F\paran{ d_{1,1} }"'] \arrow[rru, "F\paran{ d_{1,0} }", bend left] & & F(1) \arrow[u, "\text{swap}^{-1}", bend left=49]
			\end{tikzcd}
		\end{equation}
	\end{Axiom}
	
	We shall see later how the $0$-th cell $F(0)$ and $1$-th cell $F(1)$ alone create a notion of homotopy. Based on nomenclature used in the axiomatic theory of sets \cite{Lawvere1964set, Leinster2014Set, Goldblatt2014topoi, Leinster_topos_2010}, morphisms originating from the terminal element $F(0)$ are called \emph{elements} of the target object. Thus according to \eqref{eqn:def:face}, the face maps $d_{1,0}$ and $d_{1,1}$ are two elements of $F(1)$. The 1-cell $F(1)$ acts as a bridge between these two distinguished elements. The terminal elements this play the role of entry and exit points. Axiom \ref{A:swap} establishes a symmetry of these elements. Note that for the example of \eqref{eqn:def:StndrdTopoSmplx}, the morphism $\text{swap}$ is simply the map that reverses the orientation of the unit interval, which is also the $1$-cell. Such a reversal also interchanges the endpoints, which are the images of the $0$-cell under the face maps.	
	
	To prove transitivity of homotopy, we shall need the following assumption
	
	\begin{Axiom} \label{A:F1_join}
		The diagram 
		\begin{equation} \label{eqn:dip3d}
			\begin{tikzcd} [column sep = large]
				F(1) & F(0) \arrow[l, "F\paran{d_{1,0}}"'] \arrow[r, "F\paran{d_{1,1}}"] & F(1)
			\end{tikzcd}
		\end{equation}
		has a colimit, which is $F(1)$ itself.
	\end{Axiom} 
	
	For the example of \eqref{eqn:def:StndrdTopoSmplx}, the colimit in \eqref{eqn:dip3d} takes the form  
	\[\begin{tikzcd}
		& I \arrow[dr, "\text{left half}"] \\
		\{0\}  \arrow[ur, "\text{left edge}"] \arrow[dr, "\text{right edge}"'] \arrow[rr, "\text{center}"] && I \\
		& I \arrow[ur, "\text{right half}"'] 
	\end{tikzcd}\]
	In fact this nature of the interval $I$ as a gluing of two copies of itself leads to a categorical notion of \emph{self-similarity} \cite{Leinster2011selfsim, Freyd2008algebraic}. It provides the structural basis for composition of homotopies.
	
	\paragraph{Nerve construction} The purpose of the nerve construction is to establish a fundamental link between the purely algebraic structure of the category $\calC$ and the combinatorial/geometric structure of a simplicial set. The general functor $F$ induces a special functor called the \emph{nerve} of the category $\calC$ \cite{Riehl2011sset} as shown below :
	\begin{equation} \label{eqn:def:Nerve}
		\begin{split}
			& \Nerve = \Nerve_F : \calC \to \sSet, \\ 
			& \forall c\in ob(\calC) \;:\; \Nerve(c) := \Hom \paran{ F\cdot; c } : \Delta^{op} \to \SetCat .
		\end{split}
	\end{equation}
	The functor $\Nerve_F$ assigns a simplicial set to each object of $\calC$. Thus it assigns a purely combinatorial identity to each object in $\calC$. This functor will be the key ingredient to creating a notion of Homology, as we present later in \eqref{eqn:CatHmlgy}. For the example of \eqref{eqn:def:StndrdTopoSmplx}, $\Nerve_F(X)$ is the collection of all possible continuous mappings of the $n$-dimensional simplex into the topological object $X$. Thus $\Nerve_F(X)$ becomes a ledger for the topological ``content" of $X$, and the entries of the ledger are all possible embeddings of $n$-dimensional disks. This feature of the nerve functor has found use in several categorical investigations, such as K-theory \cite[e.g.]{Schwanzl1994basic}, abstractions of homotopy theory \cite[e.g.]{Rezk2001model, bergner2009complt}, and dendroidal sets \cite[e.g.]{Cisinski2013dendro, Cisinski2011dendro}.
	
	In the example of \eqref{eqn:def:StndrdTopoSmplx}, it is easy to visualize the simplices being built inductively. $F(n+1)$ can be built by erecting a convex tent above $F(n)$. With this visual  interpretation, each simplex can be interpreted to be the base / bottom face of the next higher simplex. The $i$-th face of the $n$-simplex becomes the $i+1$-th face of the $n+1$-simplex. We make this nesting relation more precise for the general functor \eqref{eqn:functorF}.
	
	Our last and fifth axiom relies on the notion of natural transformations between functors. 
	Let $\tilde{\Delta}$ be the subcategory of $\Delta$ generated only by the face maps $\SetDef{ d_{n,i} }{ 0 \leq i \leq n }$. Let $\tilde{\Delta} \xrightarrow{i} \Delta$ denote the obvious inclusion of categories. The simplified simplex $\tilde{\Delta}$ has an in-built $\text{Shift}$ functor : 
	\begin{equation} \label{eqn:def:Shift}
		\begin{tikzcd} \tilde{\Delta} \arrow[rr, "\text{Shift}"] && \tilde{\Delta}  \end{tikzcd}, \quad 
		\begin{tikzcd} \Holud{n} \arrow[d, "d_{n,i}"'] \\ \akashi{n+1} \end{tikzcd} 
		\begin{tikzcd} {} \arrow[r, mapsto] & {} \end{tikzcd}
		\begin{tikzcd} \Holud{n+1} \arrow[d, "d_{n+1,i+1}"] \\ \akashi{n+2}	\end{tikzcd}
	\end{equation}
	A key observation is the existence of a natural transformation $\Base$ as shown below :
	\begin{equation} \label{eqn:def:Base}
		\begin{tikzcd}
			& \Delta^{op} \arrow{rrr}{\Nerve_{F}(X)} &&& \SetCat \\
			\tilde{\Delta}^{op} \arrow[dashed]{urrrr}[name=n1]{} \arrow[d, "\text{Shift}"'] \arrow[dashed]{drrrr}[name=n2]{} \arrow[d, "\text{Shift}"'] \arrow[ur, "\iota"] \\ 
			\tilde{\Delta}^{op} \arrow[r, "\iota"] & \Delta^{op} \arrow{rrr}[swap]{\Nerve_{F}(X)} &&& \SetCat
			\arrow[shorten <=1pt, shorten >=1pt, Rightarrow, to path={(n2) to[out=90,in=-90] node[xshift=20pt]{$\Base$} (n1)} ]{  }
		\end{tikzcd} , \quad 
		\begin{tikzcd}
			\Hom_{\calC} \paran{ F(n+1); X } \arrow{d}{ \Base_n } [swap]{ \circ F \paran{ d_{n+1,0} } } \\ \Hom_{\calC} \paran{ F(n); X }
		\end{tikzcd} , \quad 
		\forall n\in \num_0 ,
	\end{equation}
	whose connecting morphisms are provided by right-composition with the zeroeth face maps. The natural transformation of \eqref{eqn:def:Base} always exists uniquely, and will be verified in Section \ref{sec:cnvxty}. We next formulate a purely categorical notion of convexity. 
	
	\paragraph{Convexity} Any object $X$ will be called \emph{convex} if it has the a natural transformation $\Cone^X$ which plays the role of a right inverse of $\Base$ : 
	\begin{equation} \label{eqn:def:ConeNat}
		\begin{tikzcd}
			    & && & \Delta^{op} \arrow{drrr}{\Nerve_{F}(X)} \\
\Delta^{op} \arrow{ddr}[swap]{ \Nerve_{F}(X) } & && & {} &&& \SetCat \\
& {} && \tilde{\Delta}^{op} \arrow[dashed, bend right=10, Akashi]{dll}[name = n3]{} \arrow{ulll}[swap]{\iota} \arrow[dashed, Akashi]{urrrr}[name=n1]{} \arrow[dd, pos=0.3, "\text{Shift}"'] \arrow[dashed, Akashi]{drrrr}[name=n2]{} \arrow[uur, pos=0.8, "\iota"] \\ 
& \SetCat && {} & {} &&& \SetCat \\
& && \tilde{\Delta}^{op} \arrow[r, "\iota"] & \Delta^{op} \arrow{urrr}[swap]{\Nerve_{F}(X)}
\arrow[shorten <=1pt, shorten >=1pt, Rightarrow, Shobuj, to path={(n2) to[out=90,in=-45] node[xshift=20pt]{$\Base$} (n1)} ]{  }
\arrow[shorten <=1pt, shorten >=1pt, Rightarrow, Shobuj, to path={(n3) to[out=-90,in=-90] node[yshift=15pt]{$\Cone^X$} (n2)} ]{  }
\arrow[shorten <=1pt, shorten >=1pt, Rightarrow, Shobuj, to path={(n1) to[out=135,in=90] node[yshift=15pt]{$\Id$} (n3)} ]{  }
		\end{tikzcd}
	\end{equation}
	The thick arrows in \eqref{eqn:def:ConeNat} represent natural transformations. This arrangement is one of the major contributions of this paper. Convexity is usually interpreted as an intrinsic property of a space, and resulting form a closure property with respect to convex linear sums. Such a linear structure may not be available for a general object or category. Equation \eqref{eqn:def:ConeNat} redefines convexity as a relational property. These relations concisely contained in the property of the transformation $\Cone$ being \emph{natural}. For the topological example of \eqref{eqn:def:StndrdTopoSmplx}, $\Cone$ becomes the \emph{Cone-construction}. There its connecting morphisms are maps $\Cone^X_n : \Hom \paran{ F(n) ; X } \to \Hom \paran{ F(n+1) ; X }$. Recall that the standard topological $n$-simplex can be described by coordinates 
	\[ F_{stndrd, Topo} (n) := \SetDef{ \paran{ t_0, \ldots, t_n } }{ t_j \geq 0, \, \sum_{j=0}^{n} t_j = 1 } , \quad \forall n\in \num_0.\]
	Let $a_n$ be an arbitrary point in the $n$-th simplex. Using these coordinates, the action of the (topological) cone transformations may be described as
	\begin{equation} \label{eqn:Cone_Topo}
		\paran{ \Cone^X_n \sigma } \paran{ t_0, \ldots, t_n, t_{n+1} } :=  
		\begin{cases}
			t_{0} a_n + \paran{1-t_{0}} \sigma \paran{ \frac{t_1}{1-t_{0}} , \ldots, \frac{t_{n+1}}{1-t_{0}} } & \mbox{ if } t_{0} < 1 \\
			a_n & \mbox{ if } t_{0}  =1 
		\end{cases}, 
		\quad \forall n\in \num_.
	\end{equation}
	In $\Topo$, the cells $F(n)$ are obviously convex from their very definition in \eqref{eqn:def:StndrdTopoSmplx} as a convex hull. We formally state this as our final axiom :
	
	\begin{Axiom} \label{A:convex}
		All the cells $\SetDef{F(n)}{n\in\num_0}$ are convex.
	\end{Axiom}
	
	This completes the formulation of our five basic axioms.
	
	\paragraph{Main result} The axioms we have stated are based the general functor \eqref{eqn:functorF}. Axiom \ref{A:C} is entirely about about the category $\calC$, while the next three axioms \ref{A:1_0_cell}, \ref{A:swap} and \ref{A:F1_join} are entirely about the first two cells of the functor $F$. Axiom \ref{A:convex} is the only assumption made on the functor $F$ as a whole. Our main result is : 
	
	\begin{theorem} [Main result] \label{thm:1}
		Consider the functor $F$ as in \eqref{eqn:functorF} mapping the simplex category $\Delta$ into an arbitrary category $\calC$. 
		\begin{enumerate} [(i)]
			\item $F$ creates a homology functor for the category $\calC$, as defined in \eqref{eqn:CatHmlgy}.
			\item Under Axioms \ref{A:C}, \ref{A:1_0_cell}, \ref{A:swap} and \ref{A:F1_join}, there is a notion of homotopy between morphisms of $\calC$, as defined in \eqref{eqn:def:hmtpy:2}, along with homotopy equivalence of objects.
			\item Now suppose that $F$ satisfies Axiom \ref{A:convex}. 
			Then  homology is homotopy invariant.
		\end{enumerate}
	\end{theorem}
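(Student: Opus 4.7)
The strategy is to handle the three parts in order, with the nerve construction \eqref{eqn:def:Nerve} serving as the bridge between the categorical data in $\calC$ and the combinatorial chain complex on which homology is computed. Part (i) follows almost immediately: $\Nerve_F : \calC \to \sSet$ is a functor by construction, and composing it with the standard free-abelian-group chain-complex functor on $\sSet$ (boundary $\partial$ given by alternating sums of face maps, with $\partial^2 = 0$ supplied by the simplicial identities \eqref{eqn:smplc_id}) and then with degree-wise homology of chain complexes produces the graded functor referred to in \eqref{eqn:CatHmlgy}. No axioms are needed beyond the fact that $F$ is a functor.

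For part (ii) I would define a homotopy between morphisms $f,g : X \to Y$ to be a morphism $H : X \times F(1) \to Y$ whose precompositions with $\mathrm{id}_X \times F(d_{1,0})$ and $\mathrm{id}_X \times F(d_{1,1})$ (using the canonical isomorphism $X \times F(0) \cong X$ granted by Axioms \ref{A:C} and \ref{A:1_0_cell}) recover $f$ and $g$ respectively. Reflexivity comes from the projection $X \times F(1) \to X \xrightarrow{f} Y$. Symmetry is precisely what Axiom \ref{A:swap} was designed to give: postcomposing $H$ with $\mathrm{id}_X \times \mathrm{swap}$ swaps the two endpoint restrictions. Transitivity uses Axiom \ref{A:F1_join}: two homotopies sharing a common middle morphism $X \to Y$ assemble, through the universal property of the colimit \eqref{eqn:dip3d}, into a morphism out of $X \times F(1)$. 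Homotopy equivalence of objects is then the standard notion of a pair of morphisms that are mutually inverse up to this relation.

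Part (iii) is the substantive step. Given a homotopy $H$ between $f$ and $g$, I would first lift it to a simplicial homotopy between $\Nerve_F(f)$ and $\Nerve_F(g)$ by applying $\Hom_\calC(F(-), -)$ to $H$ levelwise. To promote this to equality on homology I would use the convex structure of Axiom \ref{A:convex}: the natural transformation $\Cone^Y$ provided by \eqref{eqn:def:ConeNat} is a right inverse of $\Base$, and its naturality against the \emph{remaining} face maps $d_{n+1,i}$ for $i \geq 1$, mediated by the $\mathrm{Shift}$ functor of \eqref{eqn:def:Shift}, lets me manufacture a chain homotopy $s_n : C_n(\Nerve_F(Y)) \to C_{n+1}(\Nerve_F(Y))$ that satisfies the classical identity $\partial s + s \partial = (\Nerve_F f)_\# - (\Nerve_F g)_\#$. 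Passing to homology then yields $H_n(f) = H_n(g)$ for every $n$.

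\textbf{Main obstacle.} The delicate part will be the chain-level bookkeeping in part (iii). The $\Cone$--$\Base$ arrangement of \eqref{eqn:def:ConeNat} naturally only addresses the \emph{zeroth} face map, whereas the chain boundary $\partial$ is an alternating sum over all $n+1$ faces of a simplex. I will have to verify, face by face, that the $\mathrm{Shift}$ functor together with the simplicial identities \eqref{eqn:smplc_id} arranges the non-zeroth face contributions so that they cancel in pairs and leave exactly the difference $f_\# - g_\#$. This is the categorical analogue of the prism-decomposition argument in singular homology, and I would carry it out by induction on simplicial degree, using naturality of $\Cone$ to reduce the $(n+1)$-dimensional identity to the $n$-dimensional one.
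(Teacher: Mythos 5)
Parts (i) and (ii) of your plan coincide with the paper's argument: homology is the composite of the nerve with simplicial homology (Section \ref{sec:hmlgy}), and reflexivity, symmetry and transitivity of homotopy are obtained from Axioms \ref{A:C}--\ref{A:F1_join} exactly as you describe (Section \ref{sec:hmtpy}). The problem is in part (iii), where your plan contains a genuine gap: you propose to build the chain homotopy $s_n$ from ``the natural transformation $\Cone^Y$.'' But Axiom \ref{A:convex} only asserts that the \emph{cells} $F(n)$ are convex; a general object $Y$ carries no cone construction, so $\Cone^Y$ need not exist. Indeed it cannot exist in general: by Theorem \ref{thm:cnvxty}~(ii) a convex object is acyclic, so if every codomain $Y$ were convex the whole homology theory would be trivial. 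The identity a cone construction yields is $\partial_{n+1}\Cone_n + \Cone_{n-1}\partial_n = \Id$ \eqref{eqn:P:7}, which is a \emph{contracting} homotopy proving acyclicity of the convex object itself --- it is not the prism identity $\partial s + s\partial = f_{\#} - g_{\#}$ you need, and no amount of face-by-face bookkeeping on $Y$ will convert one into the other.

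The paper's route, which you would need to adopt, inserts two intermediaries you omit. First, convexity must be shown to be closed under finite products (Theorem \ref{thm:cnvxty}~(i)), so that the \emph{models} $F(N)\times F(1)$ are convex, hence acyclic. Second, the acyclicity of these models is fed into an acyclic-models induction (Lemma \ref{lem:P2_to_P5} and Section \ref{sec:proof:P2_to_P5}): one constructs natural maps $P^{X}_n : \Chain(X)_n \to \Chain(X\times F(1))_{n+1}$ satisfying $\Chain(\lambda^X_1) - \Chain(\lambda^X_0) = \partial P + P\partial$, where at each inductive stage the obstruction cycle $Q_N(\Id_{F(N)})$ lives in $\Chain(F(N)\times F(1))$ and is killed precisely because that object is acyclic; the operator for a general $X$ is then obtained by pushing forward along $\sigma \times \Id_{F(1)}$. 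Only after $P^X$ exists does one compose with $\Chain(H)$ to get the chain homotopy between $f_{\#}$ and $g_{\#}$ (Lemmas \ref{lem:P5_to_P4} and \ref{lem:P4_to_P3}). Your ``main obstacle'' paragraph correctly identifies the face-index bookkeeping, but that bookkeeping belongs to the proof that convex objects are acyclic (Lemma \ref{lem:P6toP7}), not to the construction of the prism operator; the two steps must be kept separate and connected by the acyclic-models induction.
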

	
	Theorem \ref{thm:1} is proved over the course of the next three sections. 
	
	\paragraph{Remark} Theorem \ref{thm:1} provides a separation of various concepts, as outlined outlined in Figure \ref{fig:paper_outline}. The separate claims list the precise Axioms on which the notions of Homotopy and Homology depend. The construction of Homology requires no assumption other than the functor $F$ \eqref{eqn:functorF}. Homotopy requires Axioms \ref{A:C}, \ref{A:1_0_cell}, \ref{A:swap} and \ref{A:F1_join}. Figure \ref{fig:hmtpy_outline} and Section \ref{sec:hmtpy} further delineates the precise role played by each axiom in constructing the various components of homotopy. The fifth axiom \ref{A:convex} of convexity is required to establish the inter-dependence of homology and homotopy. Also see Figure \ref{fig:paper_outline} for a summary of this main result. 
	
	\paragraph{Remark} There are constructs of homology that are purely combinatorial in nature, such as the homology of simplicial complexes. Those lie outside the scope of Theorem \ref{thm:1}.
	
	\paragraph{Remark} The major innovation in Theorem \ref{thm:1} is the Axiom \ref{A:convex} on convexity. Convexity is usually interpreted via linear convex sums. Axiom \ref{A:convex} extracts the structural essence of convex, using the language of natural transformations. See Theorem \ref{thm:cnvxty} where convexity, a property borne by $\calC$, implies acyclicity (zero-homology), a property borne by the homology functor.
	
	\paragraph{Outline} We first verify claim~(i) of Theorem \ref{thm:1} by reviewing how the nerve construction extends to a homology functor, in Section \ref{sec:hmlgy}. Next, in Section \ref{sec:hmtpy} we formulate a generalized notion of homotopy that originates from the functor $F$ in \eqref{eqn:functorF}, as claimed in Theorem \ref{thm:1}~(ii). There we also derive properties such as reflexivity, symmetry, transitivity and homotopy equivalence. Claim (iii) is proved in Section \ref{sec:hmlgy_htpy:2}. There we identify several intermediary properties which guarantee the invariance of homology with homotopy. The axiomatic approach that we present in this paper open several new directions of investigation. These are discussed briefly in Section \ref{sec:conclus}. Finally, we provide some lemmas and proofs in Section \ref{sec:proofs}.
	
	\section{Homology} \label{sec:hmlgy} 
	
	We now review the creation of Homology from a co-simplicial object such as $F$ in \eqref{eqn:functorF}. Homology is a functor, labeled as $\Homology_{\calG, F}$ in the diagram below.
	\begin{equation} \label{eqn:CatHmlgy}
		\begin{tikzcd}[scale cd = 1.5]
			\calC \arrow[dashed, Shobuj]{ddrr}[swap]{ \Homology_{\calG, F} } \arrow[Akashi]{rrrr}{ \Nerve_{F} } && && \sSet \arrow[dashed, Holud]{ddrr}[swap]{ \Chain_{\calG} } \arrow[dashed, Itranga]{ddll}[swap]{ \text{SimpHom}_{\calG} } \arrow{rr}{\otimes \calG} && \sGroup \arrow{dd}{ \text{alt} } \\
			\\
			&& \AbelCat^{\num} && && \ChainCmplx \arrow{llll}{ \Homology_{\calA} }
		\end{tikzcd} 
	\end{equation} 
	All the dashed arrows in this diagram are defined via composition. $\calG$ denotes an Abelian group, most commonly taken to be $\integer$. Homology can thus be simply stated as the composition of two ingredients -- the simplicial homology functor (red) with the nerve construction (blue). The category $\sSet$ is the category of simplicial sets. In general, given any category $\calX$, a \emph{simplicial $\calX$-object} is a functor $x:\Delta^{op} \to \calX$, i.e., an object of the functor category $\Functor{\Delta^{op}}{\calX}$. $\sSet$ corresponds to the special case when $\calX=\SetCat$, the category of small sets. The first ingredient is the Nerve functor $\Nerve_F$ (blue) which we have defined previously in \eqref{eqn:def:Nerve}. It encodes category $\calC$ into the category $\sSet$. Thereby every object in $\calC$ is assigned a combinatorial identity. 
	
	The functor $\text{SimpHom}_{\calG}$ (red) converts this combinatorial information into algebraic form. It formalizes the idea of the number of holes of a given dimension in the complex. Every simplicial object is assigned a sequence of Abelian groups, and the $n$-th group of this sequence is a characterization of the number of $n$-dimensional holes. The construction of simplicial homology is completely independent of the category $\calC$ or the nerve construction. The details will be omitted and the readers are referred to classical sources such as \cite{Hatcher2002algtopo, Weibel1994Hmlgy, Rotman2013algtopo}. Broadly, it is the composite of three functors. This first involves taking a tensor product with $\calG$. This operation converts any simplicial set into a simplicial Abelian group. The face maps of the simplex $\Delta$ are reversed in direction and exist as \emph{co-face} maps in $\left[ \text{sGroup} \right]$. These are combined in an alternating sum to create an object called \textit{chain complex}. 
	
	A chain complex consists of a sequence of Abelian groups $\paran{ C_n }_{n\in\num_0}$ along with group homomorphisms $\partial_n : C_n \to C_{n-1}$ such that $\partial_{n-1} \circ \partial_{n} \equiv 0$ for every index $n$. The homomorphisms are called \emph{boundary} maps. Element-wise maps between two chain complexes which commute with the boundary maps are called \emph{chain maps}. Chain complexes and chain maps together create the category $\ChainCmplx$. 
	
	When $\calG=\integer$, the $n$-th chain complex object created out of the construction in \eqref{eqn:CatHmlgy} is the free Abelian group generated by $ \Hom_{\calC} \paran{ F(n); X } $. Given a typical chain complex as in 
	\[\begin{tikzcd}
		0 & C_0 \arrow[l, "0"] & C_1 \arrow[l, "\partial_1"] & \ldots \arrow[l, "\partial_2"] & C_{n-} \arrow[l, "\partial_{n-1}"] & C_n \arrow[l, "\partial_{n}"] & C_{n+1} \arrow[l, "\partial_{n+1}"] & \ldots \arrow[l, "\partial_{n+2}"]
	\end{tikzcd}\]
	let us set $B_n$ to be the image of $\partial_{n+1}$ and $Z_n$ to be the kernel of $\partial_n$. They are both subgroups of $c_n$ are known as the $n$-th \emph{boundaries} and \emph{cycles} respectively. By design, $B_n$ is a subgroup of $Z_n$. The $n$-th homology group of this chain complex is the quotient $Z_n/B_n$. This construction is also functorial and creates the last link in the construction of simplicial homology. 
	
	This completes a very general discussion of homology, all stemming from the existence of the functor \eqref{eqn:functorF}. In summary, a homology functor embeds the category $\calC$ into the algebraic category of Abelian groups. Due to the simplistic nature of $\AbelCat$, it elucidates many properties within the category $\calC$. Suppose $Y$ is an acyclic object, and $X$ is a non-acyclic object. Then the arrangement on the left below is not possible :
	\[\begin{tikzcd} 
		& Y \arrow[dr, "g"] \\
		X \arrow[ur, "f"] \arrow[rr, dashed, "\Id"', "\cong"] & & X
	\end{tikzcd} ,\; 
	\begin{tikzcd} {} \arrow[rr, mapsto] && {} \end{tikzcd} \; 
	\begin{tikzcd} 
		& \begin{array}{c} \Homology(Y) \\ =0 \end{array} \arrow[dr, "g"] \\
		\Homology(X) \arrow[ur, "f"] \arrow[rr, dashed, "\Id"', "\cong"] & & \Homology(X)
	\end{tikzcd}\]
	The diagram on the right is the image of the diagram on the left, under the homology functor. Because the intermediate Homology is zero, the composite group homomorphism map must also be zero. This violates the functorial property of Homology, which must map identity morphisms into identity morphisms. Thus 
	
	\begin{corollary} \label{cor:s9lx3}
		Acyclic objects cannot be retracts of non-acyclic objects    
	\end{corollary}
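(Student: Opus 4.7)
The plan is to apply the functoriality of $\Homology_{\calG, F}$ to the retraction data, following the functorial squeeze sketched in the paragraph preceding the corollary. Suppose, to argue by contradiction, that an acyclic $Y$ is a retract of a non-acyclic $X$. I read this, in keeping with the diagram that precedes the corollary, as the assertion that there exist morphisms $f \colon X \to Y$ and $g \colon Y \to X$ in $\calC$ with $g \circ f = \mathrm{Id}_X$, so that the identity on $X$ is witnessed to factor through $Y$.

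Next I would apply $\Homology_{\calG, F}$ to the whole triangle. Because any functor preserves both identities and composition, this transports to the equation $\Homology(g) \circ \Homology(f) = \mathrm{Id}_{\Homology(X)}$ in the graded category $\AbelCat^{\num}$. Since $\Homology(Y) = 0$ by the acyclicity hypothesis on $Y$, the composite on the left-hand side factors through the zero object of $\AbelCat^{\num}$ and must therefore itself be the zero morphism in every degree. Equating the resulting zero morphism with $\mathrm{Id}_{\Homology(X)}$ forces $\Homology(X) = 0$ in every degree, contradicting the non-acyclicity of $X$.

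The only subtle step is verifying that a morphism of graded Abelian groups factoring through the zero object is automatically the zero morphism; this is immediate from the universal property of the zero object in $\AbelCat$ and propagates degreewise to $\AbelCat^{\num}$. I do not expect any further obstacle once the retract datum is folded into the diagram and functoriality is invoked. The one place that requires attention is the orientation encoded in the word \emph{retract}: the argument as I have written it uses the identity on the non-acyclic side $X$ rather than on the acyclic side $Y$, so I would double-check that the convention in force matches the diagrammatic reading above, in which the acyclic object sits as the intermediate factor through which the identity on the non-acyclic object is forced to pass. Under that reading the argument closes in a single line.
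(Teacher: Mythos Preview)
Your argument is correct and matches the paper's own proof, which is precisely the functorial squeeze sketched in the paragraph and diagram immediately preceding the corollary: apply $\Homology$ to the factorization $\Id_X = g\circ f$ through the acyclic $Y$, observe the composite must be zero, and contradict functoriality on identities. Your caveat about the orientation of the word \emph{retract} is apt---the paper's diagram indeed places the identity on the non-acyclic $X$, and your reading aligns with that.
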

	
	Corollary \ref{cor:s9lx3} happens to be a well known generalization of Brower's fixed point theorem \cite{Bredon2013topo}. 	We next consider the notion of Homotopy arising from this same functor.
	
	\section{Homotopy} \label{sec:hmtpy}
	
	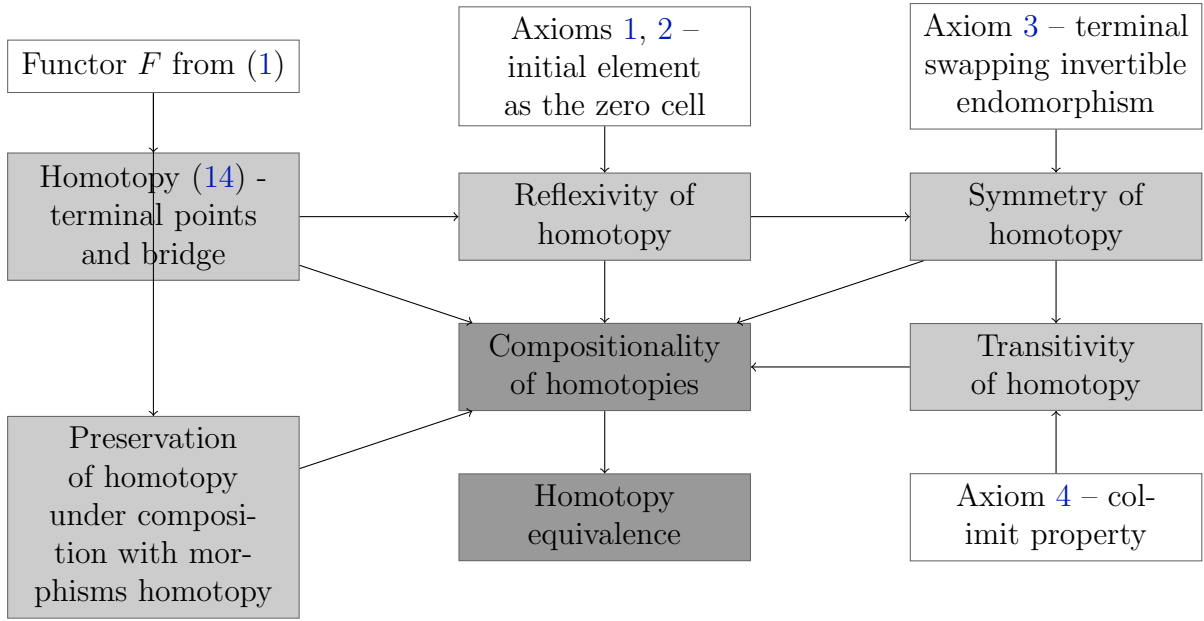
\begin{figure}
		\centering
		\begin{tikzpicture}[scale=1.0, transform shape]
	\node [style={rect2}] (1) at (0, -1.5\rowA) {Functor $F$ from \eqref{eqn:functorF}};
	\node [style={rect3}] (2) at (0, -0.1\rowA) {Homotopy \eqref{eqn:def:hmtpy:2} - terminal points and bridge};
	\node [style={rect4}] (3) at (\columnA, -\rowA) {Reflexivity of homotopy};
	\node [style={rect5}] (4) at (2\columnA, -\rowA) {Symmetry of homotopy};
	\node [style={rect2}] (5) at (\columnA, 0) {Axioms \ref{A:C}, \ref{A:1_0_cell} -- initial element as the zero cell};
	\node [style={rect2}] (6) at (2\columnA, 0) {Axiom \ref{A:swap} -- terminal swapping invertible endomorphism};
	\node [style={rect6}] (7) at (2\columnA, -2\rowA) {Transitivity of homotopy};
	\node [style={rect4}] (8) at (0\columnA, -3\rowA) {Preservation of homotopy under composition with homotopic morphisms};
	\node [style={rect2}] (9) at (2\columnA, -3\rowA) {Axiom \ref{A:F1_join} -- colimit property};
	\node [style={rect7}] (10) at (\columnA, -2\rowA) {Compositionality of homotopies};
	\node [style={rect8}] (11) at (\columnA, -3\rowA) { \textcolor{kagoj}{Homotopy  equivalence} };
	\draw[-to] (1) to (2);
	\draw[-to] (2) to (3);
	\draw[-to] (3) to (4);
	\draw[-to] (5) to (3);
	\draw[-to] (6) to (4);
	\draw[-to] (4) to (7);
	\draw[-to] (1) to (8);
	\draw[-to] (9) to (7);
	\draw[-to] (2) to (10);
	\draw[-to] (8) to (10);
	\draw[-to] (7) to (10);
	\draw[-to] (4) to (10);
	\draw[-to] (3) to (10);
	\draw[-to] (10) to (11);
\end{tikzpicture}
		\caption{Construction of homotopy.}
		\label{fig:hmtpy_outline}
	\end{figure}
	

	The immediate consequence of Axioms \ref{A:C} and \ref{A:1_0_cell} is the following diagram :
	\[\begin{tikzcd}
		F(0) \arrow[rr, "F\paran{ d_{1,0} }"] && F(1) && F(0) \arrow[ll, "F\paran{ d_{1,1} }"'] \\
		1_{\calC} \arrow[urr, dashed] \arrow[u, "="] && && 1_{\calC} \arrow[u, "="'] \arrow[ull, dashed]
	\end{tikzcd}\]
	This diagram also serves as the basis for the notion of homotopy. Given two objects $X,Y\in ob(\calC)$, we say that two morphisms $f,g:X\to Y$ are \emph{homotopic} if there is a morphism $H : X\times F(1) \to Y$ such that the following commutation holds :
	\begin{equation} \label{eqn:def:hmtpy:2}
		\begin{tikzcd} 
			X\times 1_{\calC} \arrow[Holud]{rrr}{ X \times F\paran{ d_{1,0} } } &&& X\times F(1) \arrow[Shobuj]{d}{H} &&& X\times 1_{\calC} \arrow[Akashi]{lll}[swap]{ X \times F\paran{ d_{1,1} } } \\ 
			X \arrow[Holud]{rrr}[swap]{f} \arrow[Holud]{u}{\cong} &&& Y &&& X \arrow[Akashi]{u}[swap]{\cong} \arrow[Akashi]{lll}{g}
		\end{tikzcd}
	\end{equation}
	For the example of \eqref{eqn:def:StndrdTopoSmplx}, products are Cartesian products, so the object $X\times F(1)$ is just a cylinder whose axis is parameterized by the 1-simplex $F(1)=I$, and whose base or cross-section is the topological space $X$. In that case the homotopy $H$ according to \eqref{eqn:def:hmtpy:2} is a continuous map $H$ from the cylinder into $Y$ such that when restricted to the end faces of the cylinder, $H$ becomes $f$ and $g$ respectively. 
	
	Note that out definition \eqref{eqn:def:hmtpy:2} itself does not assume its objects to be topological spaces or even collection of points. Neither does it assume anything about the morphisms other than its minimalist interpretation as an arrow. The notion of homotopy that emerges is diagrammatic, homotopy is merely an arrow that creates the diagram \eqref{eqn:def:hmtpy:2}. This definition does not imply a continuous transformation of continuous maps. It is only when we consider the category to be the special instance of Topology that the usual notion of homotopy take form. We next establish all the usual properties of Homotopy, while staying true to this minimalist diagrammatic interpretation which is free of notions such as points and continuity.
	
	The relation of being homotopic is reflexive. Consider the diagram on the left below, in which the commutation holds because of the terminal property of $1_{\calC}$ and Axiom \ref{A:1_0_cell}. Taking a product of this diagram with $X$ gives the middle commuting diagram :
	\[\begin{tikzcd} [scale cd = 0.7]
		F(0) \arrow[dr, "\cong"', dashed] \arrow[r, "F\paran{d_{1,j}}"] & F(1) \arrow[d, "!"] \\
		& F(0)
	\end{tikzcd}
	\; \begin{tikzcd} {} \arrow[mapsto, rr, "X\times"] && {} \end{tikzcd} \; 
	\begin{tikzcd} [scale cd = 0.7]
		X\times F(0) \arrow[d, "\cong"'] \arrow[dr, "\cong"', dashed] \arrow[r, "F\paran{d_{1,j}}"] & X\times F(1) \arrow[d, "!"] \\
		X & X\times F(0) \arrow[l, "\cong"]
	\end{tikzcd}
	\; \begin{tikzcd} {} \arrow[mapsto, rr, "f\circ"] && {} \end{tikzcd} \; 
	\begin{tikzcd} [scale cd = 0.7]
		X\times F(0) \arrow[d, "\cong"'] \arrow[dr, "\cong"', dashed] \arrow[r, "F\paran{d_{1,j}}"] & X\times F(1) \arrow[d, "!"] \arrow[bend left=60, dd, dashed, "\bar{f}"] \\
		X \arrow[dr, "f"'] & X\times F(0) \arrow[l, "\cong"] \\
		& Y
	\end{tikzcd}\]
	In the rightmost diagram, we have appended the morphism $f:X\to Y$ to the middle diagram. Note that this commutation holds for $j=0,1$. This information can be redrawn as 
	\[\begin{tikzcd} [column sep = large]
		X\times 1_{\calC} \arrow{r}{ X \times F\paran{ d_{1,0} } } & X\times F(1) \arrow{d}{\bar{f}} & X\times 1_{\calC} \arrow{l}[swap]{ X \times F\paran{ d_{1,1} } } \\ 
		X \arrow{r}[swap]{f} \arrow{u}{\cong} & Y & X \arrow{u}[swap]{\cong} \arrow{l}{f}
	\end{tikzcd}\]
	This diagram is a special case of \eqref{eqn:def:hmtpy:2} with $f=g$. This proves that $f=g$.
	
	\paragraph{Symmetry} At present, homotopy is just a relation within each Hom-set $\Hom(X;Y)$. By virtue of Axiom \ref{A:swap}, the relation of homotopy turns out to be symmetric. This is because of the following commutation
	\[\begin{tikzcd} [column sep = large]
		& X\times I \arrow[d, "X \times \text{rev}" ] \arrow[Shobuj, bend right=30, dashed]{dd} \\
		X\times \star \arrow{r}{ X \times F\paran{ d_{1,0} } } \arrow[bend left=20]{ur}{ X \times F\paran{ d_{1,1} } } & X\times I \arrow{d}{H} & X\times \star \arrow{l}[swap]{ X \times F\paran{ d_{1,1} } } \arrow[bend right=20]{ul}[swap]{ X \times F\paran{ d_{1,0} } } \\ 
		X \arrow{r}[swap]{f} \arrow{u}{\cong} & Y & X \arrow{u}[swap]{\cong} \arrow{l}{g}
	\end{tikzcd}\]
	Thus $f$ being homotopic to $g$ is equivalent to saying that $g$  is homotopic to $f$. Homotopy is this a reflexive and symmetric property.
	
	\paragraph{Transitivity} The limiting co-cone assumed in Axiom \ref{A:F1_join} is drawn below 
	\begin{equation} \label{eqn:A:3}
		\begin{tikzcd}
			&& F(1) \\
			F(1) \arrow[urr, bend left=30, Shobuj, "\text{left}"] && F(0) \arrow[rr, "F\paran{d_{0,0}}"'] \arrow[ll, "F\paran{d_{0,1}}"] \arrow[u, "\text{center}", Shobuj] && F(1) \arrow[ull, bend right=30, Shobuj, "\text{right}"']
		\end{tikzcd}
	\end{equation}
	The connecting morphisms of the co-cone has been shown in green, and assigned some distinguishing names. This diagram will be the key to proving transitivity. Let there be three morphisms $f, g, h :X\to Y$ such that $f$ is homotopic to $g$ via a homotopy $H$, and $g$ is homotopic to $h$ via a homotopy $H$. To establish transitivity, we first take a product of the diagram in \eqref{eqn:A:3} with $X$ to get the diagram :
	\[\begin{tikzcd} [column sep = large]
		&& X\times F(1) \\
		X\times F(1) \arrow[urr, bend left=30, "X\times \text{left}"] && X \arrow[rr, "X\times F\paran{d_{0,0}}"'] \arrow[ll, "X\times F\paran{d_{0,1}}"] \arrow[u, "\text{center}"] && X\times F(1) \arrow[ull, bend right=30, "X\times \text{right}"']
	\end{tikzcd}\]
	We now attend the morphisms and homotopies we assume in a second row below this diagram to get :
	\[\begin{tikzcd} [column sep = large]
		&& X\times F(1) \\
		X\times F(1) \arrow[ddrr, "H"', Holud] \arrow[urr, bend left=30, "X\times \text{left}", Akashi] && X \arrow[dd, Holud, "g"] \arrow[rr, "X\times F\paran{d_{0,0}}"'] \arrow[ll, "X\times F\paran{d_{0,1}}"] \arrow[u, "\text{center}", Akashi] && X\times F(1) \arrow[ull, bend right=30, "X\times \text{right}"', Akashi] \arrow[ddll, "G", Holud] \\
		\\
		X \arrow[uu, "X\times F\paran{d_{0,0}}"] \arrow[rr, "f"'] && Y && X \arrow[uu, "X\times F\paran{d_{0,1}}"'] \arrow[ll, "h"]
	\end{tikzcd}\]
	Now observe that the morphisms colored in yellow are a co-cone above the diagram \eqref{eqn:dip3d}. Thus this cone must factorize through the limiting cone (blue), via a morphism shown in green :
	\[\begin{tikzcd} [column sep = large]
		&& X\times F(1) \arrow[ddd, Shobuj, bend left=40, "G\circ H"] \\
		X\times F(1) \arrow[ddrr, "H"', Holud] \arrow[urr, bend left=30, "X\times \text{left}", Akashi] && X \arrow[dd, Holud, "g"] \arrow[rr, "X\times F\paran{d_{0,0}}"] \arrow[ll, "X\times F\paran{d_{0,1}}"'] \arrow[u, "\text{center}", Akashi] && X\times F(1) \arrow[ull, bend right=30, "X\times \text{right}"', Akashi] \arrow[ddll, "G", Holud] \\
		\\
		X \arrow[uu, "X\times F\paran{d_{0,0}}"] \arrow[rr, "f"'] && Y && X \arrow[uu, "X\times F\paran{d_{0,1}}"'] \arrow[ll, "h"]
	\end{tikzcd}\]
	This green morphism is called the concatenation / vertical composition of the homotopies $H,G$. Note that the it serves as a homotopy from $f$ to $h$. Thus homotopy is a symmetric relation. 
	
	\paragraph{Composition} We next prove that the property of being homotopic is preserved under pre- or post- composition with a fixed morphism. Throughout we assume the homotopy $H$ from \eqref{eqn:def:hmtpy:2}. Now suppose there is a morphism $h:Y\to Z$. Appending the morphism $h$ to \eqref{eqn:def:hmtpy:2} gives :
	\[\begin{tikzcd} 
		X\times 1_{\calC} \arrow{rrr}{ X \times F\paran{ d_{1,0} } } &&& X\times F(1) \arrow{d}{H} \arrow[dd, dashed, bend right=60] &&& X\times 1_{\calC} \arrow{lll}[swap]{ X \times F\paran{ d_{1,1} } } \\ 
		X \arrow[drrr, dashed, "hf"'] \arrow{rrr}{f} \arrow{u}{\cong} &&& Y \arrow[d, "h"] &&& X \arrow{u}[swap]{\cong} \arrow{lll}[swap]{g} \arrow[dlll, dashed, "hg"] \\
		&&& z
	\end{tikzcd}\]
	This creates a homotopy $hH$ which serves as a bridge between $hf$ and $hg$. Next we assume a morphism $h:W\to X$. Then the diagram in \eqref{eqn:def:hmtpy:2} can be expanded to :
	\[\begin{tikzcd} 
		W\times 1_{\calC} \arrow[dr, dotted, "h\times 1_{\calC}"] \arrow[Holud]{rrrr}{ W \times F\paran{ d_{1,0} } } & &&& W\times F(1) \arrow[Holud]{d}{ h\times F(1) } &&& & W\times 1_{\calC} \arrow[Holud]{llll}[swap]{ W \times F\paran{ d_{1,1} } } \arrow[dl, dotted, "h\times 1_{\calC}"'] \\
		& X\times 1_{\calC} \arrow{rrr}{ X \times F\paran{ d_{1,0} } } &&& X\times F(1) \arrow{d}{H} &&& X\times 1_{\calC} \arrow{lll}[swap]{ X \times F\paran{ d_{1,1} } } &  \\ 
		W \arrow[uu, Holud, "="] \arrow[Holud, r, "h"'] & X \arrow{rrr}[swap]{f} \arrow{u}{\cong} &&& Y &&& X \arrow{u}[swap]{\cong} \arrow{lll}{g} & W \arrow[Holud, l, "h"] \arrow[uu, Holud, "="']
	\end{tikzcd}\]
	This creates a homotopy $H \circ \paran{ h\times F(1) }$ which serves as a bridge between $fh$ to $gh$. This proves that homotopy is preserved under pre- and post- composition. This bring us to the last and final property :
	
	\paragraph{Composition of homotopies} Suppose there are three objects $X,Y,Z$, and homotopic pairs of morphisms $f,g : X\to Y$ and $f',g':Y\to Z$. Let these homotopies be $H,G$ respectively. Then note that
	\[\begin{split}
		f'\circ f &\sim f'\circ g , \quad \mbox{ post-composition of } H \mbox{ with } f' ,\\
		&\sim g'\circ g , \quad \mbox{ pre-composition of } G \mbox{ with } g .
	\end{split}\]
	The $\sim$ denotes a homotopy relation in the Hom-set $\Hom(X;Z)$, whose transitivity has already been established. Thus we can conclude that $f'\circ f$ is homotopic to $g'\circ g$. This relation can be summarized by saying that homotopies can be composed.
	
	\paragraph{Homotopy equivalence} A morphism in $\Hom(X;X)$ is said to be null-homotopic if it is homotopic to $X$. Two objects $A,B$ will be called \emph{homotopy equivalent} \cite{Riehl_homotopy_2014} if there are morphisms $f:A\to B$ and $g:B\to A$ such that $f\circ g \sim \Id_B$ and $g\circ f \sim \Id_A$. In other words, both $f\circ g$ and $g\circ f$ are null-homotopic.
	
	Under Axioms \ref{A:C}, \ref{A:1_0_cell}, \ref{A:swap} and \ref{A:F1_join} the relation of being homotopic is symmetric, reflexive and transitive. Thus this relation can be extended to an equivalence relation on $\Hom_{\calC}(X;Y)$. One of the most important consequences of homotopy is that one can derive a new category $\tilde{\calC}$ from $\calC$, which has the same object-sets, but in which the Hom-set $\Hom_{\tilde\calC}(X;Y)$ are the homotopy equivalence classes of $\Hom_{\calC}(X;Y)$. This is a well defined operation which retains the compositionality of $\calC$. Note that homotopy equivalence in $\calC$ is the same as isomorphism in $\tilde\calC$. This completes the construction of the notion of homotopy. 
	
	\paragraph{Related work} Categorical homotopy theory has evolved through a series of increasingly abstract and powerful axiomatic frameworks, moving from algebraic topology's specific problems to a general theory of "homotopy" in any category. Some of the earliest generalizations was in the use of (pre)-sheaves Eilenberg and Mac Lane \cite{EilenbergMacLane1945hmtpy, EilenbergMacLane1950hmtpy}, and \emph{Kan Complexes} \cite{Kan1958css}. These developments were aimed at providing a purely combinatorial model for topological spaces, making homotopy more algebraic and easier to manipulate.
	
	The first major absraction of Homotopy was Quillen's work  \cite{Quillen2006hmtpy} which introduced the concept of a \emph{Model Category} to formalize the common structure found in the various homotopy theories. Like the present work, his axiomatization was set in a general category and had three main ingredients -- \emph{weak equivalence}s, \emph{fibrations} and \emph{co-fibrations}. His formulation of Homotopy recreates the familiar notions of Homotopu using the operations of \emph{factorization} and \emph{lifting}. The goal is to formally invert the weak equivalences to form a homotopy category $\text{Ho}(\mathcal{C})$ which captures the core "homotopical" information. One of the key utilties of Quille's approach was the availability of \emph{Quillen Adjunctions}. These are adjoint functors $F: \mathcal{C} \rightleftarrows \mathcal{D}: G$ between model categories that preserve the homotopical structure, providing a systematic way to relate different homotopy theories. While model categories provided a powerful, uniform language, their \emph{point-set} nature and reliance on specific classes of maps (fibrations / cofibrations) led to technical complexities. 
	
	The last phase of evolution of Homotopy theory focused on Higher Category Theory ($\infty$-categories or $(\infty, 1)$-categories) as the fundamental realm for homotopy theory. There are two main lines of abstraction using this method. The first is the idea of \emph{quasi-categories} by Joyal and Lurie \cite[e.g.]{joyal2008notes, lurie2008what} based on \emph{inner fibrations} of $\sSet$ provides a combinatorial model for $(\infty, 1)$-categories. This framework allows for the direct study of Homotopy Limits and Colimits as a natural consequence of weighted limits/colimits in a higher category, often simplifying the complex constructions required in model categories. The second distinct method is \emph{Homotopy Type Theory} (HoTT) which interprets types as \emph{homotopy types} (or $\infty$-groupoids), formalizing a foundation of mathematics where the "equality" of objects is viewed as a path (or higher path) in a space.
	
	The present axiomatization avoids all this heavy axiomatic framework. It relies on the basic notion of an abstract interval by way of Axiom \ref{A:1_0_cell}. Figure \ref{fig:hmtpy_outline} presents an overview of how this and the other axioms lead to homotopy equivalences of morphisms, as well as notions of cyclicity. The present framework does not rely on points and sets and at the same time achieves sufficient generality. Note that the axioms \ref{A:C} -- \ref{A:F1_join} only involve a finite number of morphisms and objects ($F(0)$ and $F(1)$), and does not require an elaborate factorization or fibration system. This simplicity will be useful in the next Section, where the connections with Homology will be established.
	
	\section{Homotopy invariance of homology} \label{sec:hmlgy_htpy:2} 
	
	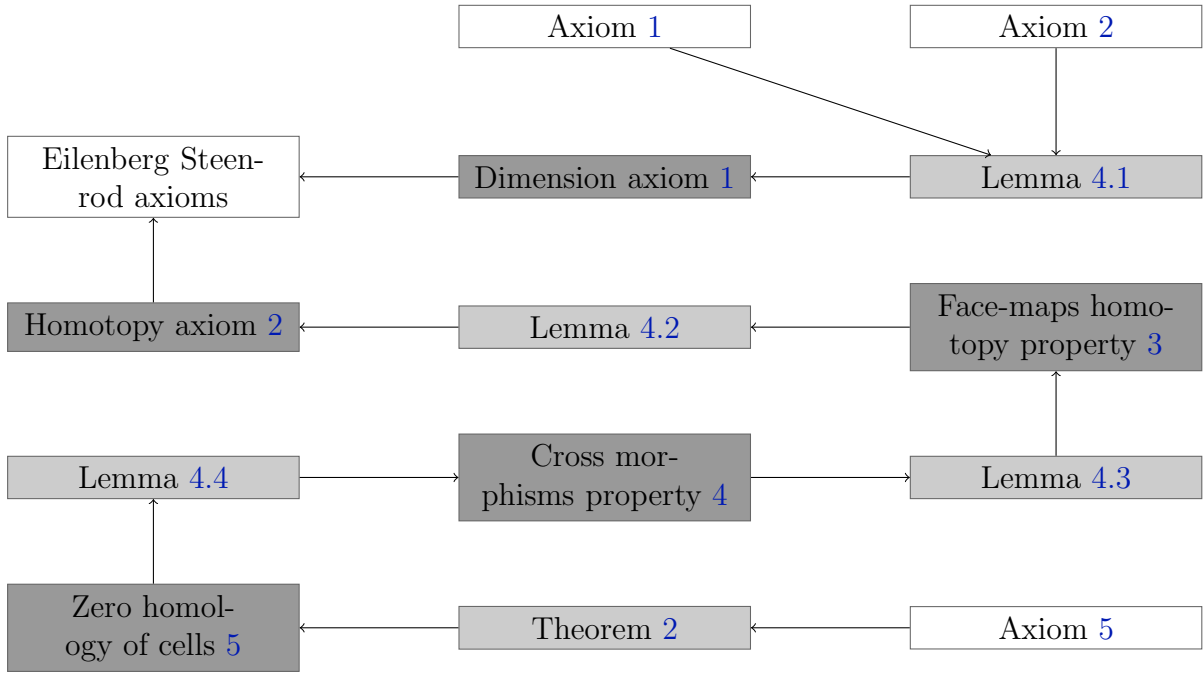
\begin{figure}
		\centering
		\begin{tikzpicture}[scale=1.0, transform shape, framed, background rectangle/.style={double, ultra thick, draw=gray, rounded corners}]
	\node [style={rect2}] (ES) at (0, 0) {Eilenberg Steenrod axioms};
	\node [style={rect6}] (P1) at (\columnA, 0) {Dimension axiom \ref{P:1}};
	\node [style={rect5}] (A1toP1) at (2\columnA, 0) {Lemma \ref{lem:A1_to_P1}};
	\node [style={rect2}] (A1) at (2\columnA, \rowA) {Axiom \ref{A:1_0_cell}};
	\node [style={rect2}] (AC) at (\columnA, \rowA) {Axiom \ref{A:C}};
	\node [style={rect6}] (P3) at (0\columnA, -\rowA) {Homotopy axiom \ref{P:3}};
	\node [style={rect5}] (P4toP3) at (\columnA, -\rowA) {Lemma \ref{lem:P4_to_P3}};
	\node [style={rect6}] (P4) at (2\columnA, -\rowA) {Face-maps homotopy property \ref{P:4}};
	\node [style={rect5}] (P5toP4) at (2\columnA, -2\rowA) {Lemma \ref{lem:P5_to_P4} };
	\node [style={rect6}] (P5) at (\columnA, -2\rowA) {Cross morphisms property \ref{P:5}};
	\node [style={rect5}] (P2toP5) at (0\columnA, -2\rowA) {Lemma \ref{lem:P2_to_P5}};
	\node [style={rect6}] (P2) at (0\columnA, -3\rowA) {Zero homology of cells \ref{P:2}};
	\node [style={rect5}] (A4toP2) at (\columnA, -3\rowA) {Theorem \ref{thm:cnvxty} };
	\node [style={rect2}] (A4) at (2\columnA, -3\rowA) {Axiom \ref{A:convex}};
	\draw[-to] (P1) to (ES);
	\draw[-to] (P3) to (ES);
	\draw[-to] (A1toP1) to (P1);
	\draw[-to] (A1) to (A1toP1);
	\draw[-to] (AC) to (A1toP1);
	\draw[-to] (P4toP3) to (P3);
	\draw[-to] (P4) to (P4toP3);
	\draw[-to] (P5toP4) to (P4);
	\draw[-to] (P5) to (P5toP4);
	\draw[-to] (P2toP5) to (P5);
	\draw[-to] (P2) to (P2toP5);
	\draw[-to] (A4toP2) to (P2);
	\draw[-to] (A4) to (A4toP2);
\end{tikzpicture}
		\caption{Axiomatic basis of the homotopy invariance of homology.}
		\label{fig:hmlgy_hmtpy}
	\end{figure} 
	
	We now examine the axioms and mechanisms which guarantee that homotopic maps induce the same homomorphisms between homology groups. We shall follow the outline presented in Figure \ref{fig:hmlgy_hmtpy}. One of the basic properties to desire from a homology theory is 
	
	\begin{property}[Dimension axiom] \label{P:1}
		Homology of $F(0)$ is zero.
	\end{property}
	
	This property is granted to us by Axiom \ref{A:1_0_cell} :
	
	\begin{lemma} [Dimension axiom] \label{lem:A1_to_P1}
		If the functor from \eqref{eqn:functorF} satisfies Axiom \ref{A:1_0_cell}, then the homology it induces satisfies the dimension axiom \ref{P:1}.
	\end{lemma}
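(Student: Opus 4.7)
The plan is to compute the homology chain complex of $F(0)$ explicitly, using that Axiom \ref{A:1_0_cell} collapses everything to the terminal object. First, since $F(0) = 1_\calC$, for every $n \in \num_0$ the set $\Nerve_F(F(0))(n) = \Hom_\calC(F(n); 1_\calC)$ has exactly one element, by the universal property of the terminal object. Thus $\Nerve_F(F(0))$ is the constant simplicial set with value the singleton in every degree, and every face and degeneracy map is the unique map between singletons.

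Next I would push this through the rest of the diagram \eqref{eqn:CatHmlgy}. Tensoring with $\calG$ replaces each singleton with the free abelian group $\calG$ on one generator, producing a simplicial abelian group that is $\calG$ in every degree with all face and degeneracy morphisms equal to $\Id_\calG$. The alternating-sum functor then yields the chain complex whose $n$-th boundary is $\partial_n = \sum_{i=0}^n (-1)^i \Id_\calG$, which equals $\Id_\calG$ when $n$ is even and $0$ when $n$ is odd.

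From here the homology computation is immediate. For $n \geq 1$, if $n$ is odd then $\ker \partial_n = \calG$ and $\Imag \partial_{n+1} = \Imag \Id_\calG = \calG$, so $H_n = 0$; if $n$ is even then $\ker \partial_n = \ker \Id_\calG = 0$, so again $H_n = 0$. This gives the dimension axiom in the classical sense that $H_n(F(0)) = 0$ for all $n \geq 1$ (with $H_0(F(0)) = \calG$ surviving as the coefficient group, as in Eilenberg--Steenrod).

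There is no real obstacle here; the lemma is essentially a bookkeeping exercise unfolding the definition of $\Nerve_F$ on the terminal object and checking that the alternating-sum boundary telescopes. The only subtle point worth flagging is the convention in Property \ref{P:1}: the statement ``homology of $F(0)$ is zero'' should be read as the higher homology groups vanishing, which is what the alternating-sum computation delivers, whereas the $H_0$ term is intrinsically the coefficient group $\calG$ and cannot be made to vanish without passing to reduced homology.
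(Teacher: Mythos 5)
Your proof is correct and follows the same route as the paper: identify $\Nerve_F(1_\calC)$ as the constant singleton simplicial set via the terminal property, then run the standard alternating-sum computation showing the boundary maps telescope (the paper simply outsources this algebra to Rotman, Thm~4.12). Your closing remark about the convention in Property~\ref{P:1} is also apt --- $H_0$ is the coefficient group, so ``homology of $F(0)$ is zero'' must be read in positive degrees or in reduced homology, a point the paper leaves implicit.
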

	
	The lemma follows from the fact that $F(0)$ is the terminal object of $\calC$. Thus for each $n\in \num$ and $c\in \calC$, $\Nerve_F(c)(n) = \Hom_{\calC} \paran{ F(n); F(1)}$ is a singleton set. The proof is algebraic and can be found in \citep[][Thm 4.12]{Rotman2013algtopo}. The next property is the main focus of this chapter as well as the paper : 
	
	\begin{property}[Homotopy axiom] \label{P:3}
		The notion of homology is homotopy invariant. In other words, if $f,g : X\to Y$  are homotopic, then their induced homologies are the same.
	\end{property}
	
	The homotopy axiom can be derived from a simpler property involving the boundary maps :
	
	\begin{property} [Face-maps homotopy property] \label{P:4}
		An object $X\in \calC$ has this property if the following composite morphisms in $\calC$
		\[\begin{tikzcd}
			&& X \arrow[d, "\cong"] \arrow[dashed, drr, "\lambda^{X}_0"] \arrow[dll, dashed, "\lambda^{X}_1"'] \\
			X\times F(1) && X\times \star \arrow[rr, "\Id_x \times F\paran{d_{1,0}}"'] \arrow[ll, "\Id_x \times F\paran{d_{1,1}}"] && X\times F(1)
		\end{tikzcd}\]
		induce the same morphisms under Homology.
	\end{property}
	
	\begin{lemma} \label{lem:P4_to_P3}
		\citep[][Lem 4.20]{Rotman2013algtopo} Suppose each object $X$ has Property \ref{P:4}. Then the Homotopy axiom \ref{P:3} is satisfied. 
	\end{lemma}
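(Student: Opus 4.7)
The plan is to derive the homotopy axiom from Property \ref{P:4} by a short diagram chase, exploiting the fact that any homotopy factors the two morphisms it relates through the face inclusions of the interval $F(1)$.

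Concretely, suppose $f, g : X \to Y$ are homotopic, witnessed by a morphism $H : X \times F(1) \to Y$ fitting into the diagram \eqref{eqn:def:hmtpy:2}. Reading off its two outer commuting squares gives
\[
f \;=\; H \circ \bigl( X \times F(d_{1,0}) \bigr) \circ \iota, \qquad
g \;=\; H \circ \bigl( X \times F(d_{1,1}) \bigr) \circ \iota,
\]
where $\iota : X \to X \times 1_{\calC}$ is the canonical isomorphism. By Axiom \ref{A:1_0_cell}, $1_{\calC} = F(0)$, so $\iota$ coincides with the isomorphism $X \cong X \times F(0)$ appearing in Property \ref{P:4}. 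Thus the displayed expressions read, in the notation of that property, as $f = H \circ \lambda_0^X$ and $g = H \circ \lambda_1^X$.

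Next I would invoke the functoriality of $\Homology_{\calG,F}$ from \eqref{eqn:CatHmlgy}, which holds because that functor is assembled as a composition of functors. Functoriality yields $\Homology(f) = \Homology(H) \circ \Homology(\lambda_0^X)$ and $\Homology(g) = \Homology(H) \circ \Homology(\lambda_1^X)$, and Property \ref{P:4}, applied to the object $X$, supplies the equality $\Homology(\lambda_0^X) = \Homology(\lambda_1^X)$. Chaining these gives $\Homology(f) = \Homology(g)$, which is precisely Property \ref{P:3}.

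There is no genuine obstacle in this proof: once Property \ref{P:4} is granted, the argument is essentially a one-line functoriality computation. The only minor bookkeeping point is the identification of the two ambient isomorphisms $X \cong X \times 1_{\calC}$ and $X \cong X \times F(0)$, which is immediate from Axiom \ref{A:1_0_cell}. All the substantive content, which will presumably require the cone natural transformation of \eqref{eqn:def:ConeNat} and the convexity Axiom \ref{A:convex} to construct an appropriate chain homotopy, is deferred to the proof of Property \ref{P:4} itself rather than being part of this lemma.
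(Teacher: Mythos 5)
Your proposal is correct and takes essentially the same route as the paper: read off $f = H\circ\lambda^X_0$ and $g = H\circ\lambda^X_1$ from the homotopy diagram, then conclude by functoriality together with Property \ref{P:4}. The only cosmetic difference is that the paper runs the computation through the intermediate $\Chain$ functor before passing to homology, whereas you apply the homology functor directly; both amount to the same one-line functoriality argument.
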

	
	\begin{proof} For the proof, we shall make use of the Chain-functor = displayed in \eqref{eqn:CatHmlgy}. For brevity we shall denote it as $\Chain$ and drop its subscript from the notation.  Recall the morphisms $\lambda^{X}_j$ from Property \ref{P:4}.   A homotopy between morphisms $f,g:X\to Y$ is a morphism $H:X\times F(1) \to Y$ such that $f = H\circ \lambda^{X}_0$ and $g = H\circ \lambda^{X}_1$. Then note that
		\[\begin{split}
			\Chain\paran{ f }_{\#} &= \Chain\paran{H\circ \lambda^{X}_0}_{\#} = \Chain\paran{ H }_{\#} \circ \Chain\paran{ \lambda^{X}_0 }_{\#} \\
			& = \Chain\paran{ H }_{\#} \circ \Chain\paran{ \lambda^{X}_1 }_{\#} , \quad \mbox{by Property \ref{P:4}} , \\
			&= \Chain\paran{H\circ \lambda^{X}_1}_{\#} = \Chain\paran{g}_{\#} .
		\end{split}\]
		In the equalities above, we have repeatedly used the composition preserving property of the functor $\Chain$. Since $f,g$ are mapped under the $\Chain$ functor into the same morphisms in the category of chain complexes, they also remain equal under application of the Chain-homology functor. Thus $f,g$ induce the same homology maps too.
	\end{proof}
	
	Thus to prove Homotopy property \ref{P:3}, we are presented with the task of proving Property \ref{P:4}. This shall follow from : 
	
	\begin{property} [Cross morphisms property] \label{P:5}
		An object $X\in \calC$ has this property if there are homomorphisms
		\[ P^{X}_{n} : \Chain(X)_n \to \Chain\paran{ X\times F(1) }_{n+1} \]
		such that
		\begin{equation} \label{eqn:P:5a}
			\Chain\paran{ \lambda^{X}_1 }_n - \Chain\paran{ \lambda^{X}_0 }_n = \partial_{n+1} P^{X}_{n} + P^{X}_{n-1} \partial_{n} .
		\end{equation}
	\end{property}
	
	\begin{lemma} \label{lem:P5_to_P4}
		\citep[][Thm 4.23]{Rotman2013algtopo} For any object $X\in \calC$, Property \ref{P:5} implies Property \ref{P:4}.
	\end{lemma}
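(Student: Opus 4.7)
The plan is to invoke the standard algebraic fact that a chain homotopy between two chain maps forces them to induce identical maps on homology, applied to the pair $\Chain(\lambda^X_0)$ and $\Chain(\lambda^X_1)$. Property \ref{P:5} is precisely the statement that the collection $\{P^X_n\}_{n\geq 0}$ constitutes a chain homotopy between these two chain maps in the category $\ChainCmplx$, so the conclusion is a direct consequence, but we should spell out the computation to keep the argument self-contained.

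First I would recall that the homology functor $\Homology_{\calA} : \ChainCmplx \to \AbelCat^{\num}$ sends a chain complex $(C_\bullet, \partial_\bullet)$ to the graded Abelian group $Z_\bullet / B_\bullet$, where $Z_n = \ker \partial_n$ and $B_n = \mathrm{im}\,\partial_{n+1}$. Fix $n\in \num_0$ and pick any cycle $z \in Z_n \subseteq \Chain(X)_n$, so that $\partial_n z = 0$. Applying equation \eqref{eqn:P:5a} to $z$ yields
\[
\Chain\paran{\lambda^{X}_1}_n(z) - \Chain\paran{\lambda^{X}_0}_n(z) = \partial_{n+1} P^{X}_{n}(z) + P^{X}_{n-1}\paran{\partial_n z} = \partial_{n+1} P^{X}_{n}(z),
\]
since the second summand vanishes. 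The right-hand side lies in $B_n \subseteq \Chain(X \times F(1))_n$, so the two chain images of $z$ differ by a boundary and hence represent the same class in $\Homology_n\paran{X \times F(1)}$.

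Next I would verify that this calculation actually establishes Property \ref{P:4}. Property \ref{P:4} asks that $\lambda^X_0$ and $\lambda^X_1$ induce the same morphisms \emph{under Homology}, which via the factorization in \eqref{eqn:CatHmlgy} means the same maps on $\Homology_n$ for every $n$. The computation above shows exactly this: the induced maps $\Homology_n(\lambda^X_0)$ and $\Homology_n(\lambda^X_1)$ agree as group homomorphisms on every homology class, because a representative cycle is sent to cohomologous images. One also needs to note that the formula \eqref{eqn:P:5a} is well-posed at $n=0$, where the term $P^X_{-1}\partial_0$ is conventionally zero (equivalently, one extends the complex by $C_{-1} = 0$); this is a standard bookkeeping point rather than a substantive obstacle.

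There is no real analytical or categorical obstacle here; the argument is purely algebraic and depends only on the definition of homology as cycles modulo boundaries and on the hypothesis \eqref{eqn:P:5a}. The genuine content of the implication Property \ref{P:5} $\Rightarrow$ Property \ref{P:4} is therefore entirely packaged into the \emph{existence} of the prism operators $P^X_n$; constructing those operators (which will rely on convexity via Axiom \ref{A:convex} and the cone natural transformation $\Cone^X$ of \eqref{eqn:def:ConeNat}) is the substantive task, but it is a separate result and lies outside the scope of this lemma.
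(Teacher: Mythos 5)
Your proposal is correct and follows essentially the same route as the paper: both arguments take a cycle $z$, use $\partial_n z = 0$ to kill the $P^X_{n-1}\partial_n$ term in \eqref{eqn:P:5a}, and conclude that $\Chain(\lambda^X_1)_n(z)$ and $\Chain(\lambda^X_0)_n(z)$ differ by the boundary $\partial_{n+1}P^X_n(z)$, hence induce equal maps on homology. Your remarks on the $n=0$ convention and on the real content residing in the existence of the $P^X_n$ are accurate but do not change the argument.
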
 
	
	\begin{proof} Let $\Boundary(X)_n$ denote the subgroup of $\Chain(X)_n$, created by boundaries. Let $z$ be a cycle in $\Chain(X)_n$, i.e., $\partial_n z = 0$. By the definition of the homology maps 
		\[ \Homology(f)_n : z \oplus \Boundary(X)_n \mapsto \Chain(f)_n (z) + \Boundary(Y)_n \]
		But by Property \ref{P:5}, we have
		\[ \paran{ \Chain(f)_n - \Chain(g)_n } (z) = \partial^Y_{} P z + P \partial z  = \partial^Y_{} P z . \]
		Since the RHS is a boundary element, so is the LHS. As a result, we can write
		\[ \Chain(f)_n(z) \oplus \Boundary(Y)_n = \Chain(g)_n(z) \oplus \Boundary(Y)_n . \]
		This means that $\Homology(f)_n$ must be equal to $\Homology(g)_n$. 
	\end{proof}
	
	Thus to prove Homotopy property \ref{P:3}, we are presented with the task of proving Property \ref{P:5}. This shall follow from : 
	
	\begin{property}[Zero homology of cells] \label{P:2}
		The cells $\SetDef{F(n)}{n\in\num_0}$ have zero homology.
	\end{property}
	
	An object $X\in \calC$ will be called \emph{acyclic} if its homology group is zero. Thus Property \ref{P:2} says that the cells $F(n)$ are all acyclic.
	
	\begin{lemma} \label{lem:P2_to_P5}
		Suppose Axioms \ref{A:C} and \ref{A:1_0_cell} hold. Then Property \ref{P:2} implies Property \ref{P:5} for every object $X\in \calC$.
	\end{lemma}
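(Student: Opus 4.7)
The plan is to implement the classical method of acyclic models, working inductively on the degree $n$. Because $\Chain(X)_n$ is the free abelian group on $\Hom_{\calC}(F(n),X)$, any natural assignment $P^X_n$ is determined by its value at the universal simplex: setting $p_n := P^{F(n)}_n(\Id_{F(n)}) \in \Chain(F(n)\times F(1))_{n+1}$, naturality in $X$ forces
\[ P^X_n(\sigma) \;=\; \Chain\paran{\sigma \times \Id_{F(1)}}_{n+1}(p_n), \qquad \sigma \in \Hom_{\calC}(F(n),X). \]
It therefore suffices to produce each $p_n$ so that \eqref{eqn:P:5a} holds when $X=F(n)$ and is evaluated at $\Id_{F(n)}$; naturality then propagates the identity to arbitrary $X$, and naturality of the resulting family is an automatic bonus.

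For the base case $n=0$, Axioms \ref{A:C} and \ref{A:1_0_cell} identify $F(0)\times F(1)$ with $F(1)$. Under this identification the required $p_0 \in \Chain(F(1))_1$ must boundary to $\Chain(\lambda^{F(0)}_1)_0(\Id) - \Chain(\lambda^{F(0)}_0)_0(\Id)$, which is the alternating difference of the face maps of $F(1)$; up to sign this is exactly the simplicial boundary of $\Id_{F(1)} \in \Chain(F(1))_1$, so we may take $p_0 = \pm \Id_{F(1)}$. For the inductive step, assuming $p_0,\dots,p_{n-1}$ are in hand, set
\[ c_n \;:=\; \Chain\paran{\lambda^{F(n)}_1}_n(\Id_{F(n)}) - \Chain\paran{\lambda^{F(n)}_0}_n(\Id_{F(n)}) - P^{F(n)}_{n-1}\paran{\partial_n \Id_{F(n)}} \;\in\; \Chain(F(n)\times F(1))_n. \]
Using the inductive hypothesis, the naturality of $\Chain(\lambda^{\cdot}_j)$, and $\partial \circ \partial = 0$, a direct calculation shows $\partial_n c_n = 0$, i.e., $c_n$ is a cycle. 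Provided $H_n\bigl(\Chain(F(n)\times F(1))\bigr)=0$, one lifts $c_n$ to some $p_n$ with $\partial_{n+1} p_n = c_n$, closing the induction.

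The principal obstacle is precisely this acyclicity in the inductive step: Property \ref{P:2} grants acyclicity of each cell $F(n)$, yet the argument demands acyclicity of the product $F(n) \times F(1)$, which is not itself a cell. Bridging this gap is the heart of the matter. The natural route is a K\"unneth- or Eilenberg--Zilber-type reduction: exhibit $\Chain(F(n)\times F(1))$ as the total complex (up to chain homotopy) of a bicomplex built from $\Chain(F(n))$ and $\Chain(F(1))$, so that P:2 applied to both factors forces acyclicity in positive degrees. Equivalently, one may invoke the Eilenberg--Mac Lane acyclic-models theorem directly: both $\Chain(-)$ and $\Chain(-\times F(1))$ are free on the models $\{F(n)\}$, and the latter is acyclic on those models by the same product-acyclicity input, so the two natural chain maps $\Chain(\lambda^{(-)}_0)$ and $\Chain(\lambda^{(-)}_1)$ (which clearly agree on $H_0$ by P:1) are naturally chain homotopic. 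Once this product acyclicity is in place, the inductive construction above is entirely formal.
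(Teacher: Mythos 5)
Your skeleton is exactly the paper's: an acyclic-models induction in which $P^X_n$ is forced by naturality to be $\Chain(\sigma\times \Id_{F(1)})_{n+1}$ applied to a universal class $p_n := P^{F(n)}_n(\Id_{F(n)})$, the base case uses $F(0)\times F(1)\cong F(1)$ via Axioms \ref{A:C} and \ref{A:1_0_cell}, and the inductive step shows the defect class (your $c_n$, the paper's $Q_N(\Id_{F(N)})$) is a cycle and then lifts it to a boundary. The one place you diverge is the step you yourself single out as ``the heart of the matter'': the lift requires acyclicity of $F(n)\times F(1)$, which is a product of cells and not a cell, so Property \ref{P:2} as literally stated does not supply it. Your diagnosis here is accurate, and it is in fact sharper than the lemma's own statement: the paper's proof does not get this acyclicity from Property \ref{P:2} either, but instead invokes Axiom \ref{A:convex} together with Theorem \ref{thm:cnvxty} (products of convex objects are convex, and convex objects are acyclic). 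In other words, the paper silently strengthens its hypotheses at exactly the point you identified.

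Where you differ is in how you propose to close that gap. The paper's route through convexity is short and self-contained given its axioms. Your proposed K\"unneth/Eilenberg--Zilber reduction is only sketched, and it is the weaker of the two options in this setting: here $\Chain(X\times Y)_n$ is free on $\Hom_{\calC}(F(n),X)\times\Hom_{\calC}(F(n),Y)$, i.e.\ the diagonal of the product simplicial set, so an Eilenberg--Zilber comparison with a tensor-product total complex is itself a nontrivial theorem whose standard proof is again by acyclic models and again needs an acyclicity input on products. If you want your argument to stand on its own, you should either prove product acyclicity directly (the paper does this by exhibiting an explicit cone operator on $X\times Y$ from cone operators on $X$ and $Y$, see \eqref{eqn:ConeProd} and Lemma \ref{lem:P7toP2}), or restate the lemma with the hypothesis it actually uses, namely acyclicity of the objects $F(n)\times F(1)$ rather than of the cells alone. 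Your base case and the formal verification of \eqref{eqn:P:5a} by naturality are fine and match the paper's.
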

	
	Lemma \ref{lem:P2_to_P5} is an outcome of the theory of \emph{acyclic models} \cite[]{EilenbergMacLane1953acyclic, Barr1996acyclic}. We provide the proof in Section \ref{sec:proof:P2_to_P5}, for the sake of completeness. The zero homology property \ref{P:2} is a consequence of the following important result
	
	\begin{theorem} [Convexity] \label{thm:cnvxty}
		Let Axioms \ref{A:C}, \ref{A:1_0_cell}, \ref{A:swap}, \ref{A:F1_join} and \ref{A:convex} hold, and recall the definition of convexity. Then the collection of convex objects of $\calC$
		\begin{enumerate} [(i)]
			\item are closed under finite products;
			\item are acyclic.
		\end{enumerate}
	\end{theorem}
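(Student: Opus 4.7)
The strategy is to handle the two clauses separately, with the bulk of the work concentrated in (ii).

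For (i), given convex objects $X$ and $Y$ with cones $\Cone^X$ and $\Cone^Y$, I would construct $\Cone^{X\times Y}$ by pairing. For any $\sigma : F(n) \to X \times Y$ with projections $\pi_X\sigma$ and $\pi_Y\sigma$, let $\Cone^{X\times Y}_n(\sigma) : F(n+1) \to X \times Y$ be the unique morphism whose projections are $\Cone^X_n(\pi_X\sigma)$ and $\Cone^Y_n(\pi_Y\sigma)$, using the universal property of products granted by Axiom \ref{A:C}. The right-inverse equation $\Base_n \circ \Cone^{X\times Y}_n = \Id$ and the naturality squares for the shifted face maps then decompose through the projections and reduce to the corresponding statements for $X$ and $Y$. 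A one-line base case observes that $1_\calC$ is trivially convex, since each $\Hom(F(n); 1_\calC)$ is a singleton, and induction on arity closes the clause.

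For (ii), I would recognize $\Cone^X$ as precisely the datum needed to construct a contracting chain homotopy on $\Chain(X)$. Extend $\Cone^X_n$ linearly to $h_n : \Chain(X)_n \to \Chain(X)_{n+1}$ and aim to prove
\[
\partial_{n+1}\, h_n(\sigma) \;=\; \sigma \;-\; h_{n-1}\!\bigl(\partial_n \sigma\bigr)
\]
on a generator $\sigma : F(n) \to X$. The left side expands as $\sum_{i=0}^{n+1} (-1)^i \Cone^X_n(\sigma)\circ F(d_{n+1,i})$; the $i=0$ summand collapses to $\sigma$ by the right-inverse axiom $\Base_n\circ \Cone^X_n = \Id$, which is the content of Axiom \ref{A:convex}. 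For $i \geq 1$, the naturality square of $\Cone^X$ at the face map $d_{n,i-1}$, whose Shift is $d_{n+1,i}$, rewrites $\Cone^X_n(\sigma)\circ F(d_{n+1,i})$ as $\Cone^X_{n-1}\!\bigl(\sigma\circ F(d_{n,i-1})\bigr)$. Re-indexing $j = i-1$ bundles these contributions into $-h_{n-1}(\partial_n\sigma)$, producing the chain-contraction identity.

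The main obstacle will be the low-dimensional edge, where the naturality formula would invoke the non-existent $d_{0,0}$ and where the unreduced chain complex of a contractible object typically retains an $H_0 = \integer$. I would resolve this by passing to the augmented chain complex: attach $\Chain(X)_{-1} = \integer$ via the augmentation $\epsilon$ that sends every generator of $\Hom(F(0); X)$ to $1$, and define $h_{-1} : \integer \to \Chain(X)_0$ to pick out a chosen apex (which exists precisely in the nontrivial case $\Hom(F(0); X) \neq \emptyset$, since the degenerate case forces every $\Hom(F(n); X)$ empty by precomposition along the $\Delta$-morphism $[0] \to [n]$ and makes the claim vacuous). Once $\partial h + h\, \partial = \Id$ holds across all degrees, every cycle $z$ satisfies $z = \partial h(z)$, so all homology groups of $X$ vanish and $X$ is acyclic.
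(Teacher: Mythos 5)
Your proposal matches the paper's proof in both parts: claim (i) is established by pairing the two cones through the universal property of the product (the paper's equation \eqref{eqn:ConeProd}), and claim (ii) by extending $\Cone^X_n$ linearly and verifying the chain contraction $\partial_{n+1}\Cone_n + \Cone_{n-1}\partial_n = \Id$, exactly as in Lemmas \ref{lem:P6toP7} and \ref{lem:P7toP2}. Your explicit handling of the degree-zero edge via the augmented complex is in fact more careful than the paper's argument, which asserts the contraction identity for all $n\in\num_0$ without addressing the undefined terms $\Cone_{-1}$ and $d_{0,0}$ and thus implicitly works with reduced homology.
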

	
	Theorem \ref{thm:cnvxty} is proved in Section \ref{sec:cnvxty}. Also see Figure \ref{fig:cnvxty} for an outline of the chain of reasoning. By Axiom \ref{A:convex}, the cells $\SetDef{ F(n) }{ n\in\num_0 }$ are convex. Thus by Theorem \ref{thm:cnvxty} (ii), they are acyclic too, as claimed in Property \ref{P:2}. This completes the verification of the Homotopy axiom. \qed 
	
	\paragraph{Remark} Similar to classic Homotopy theory, an object $X$ of $\calC$ will be called \emph{contractible} if its identity morphism $X$ is homotopic to some constant endomorphism. A \emph{constant endomorphism} is a composite of morphisms of the form $X \xrightarrow{!} 1_{\calC} \xrightarrow{x} X$. Here $x$ is an element of $X$ and the composite morphism can be interpreted to be constant of value $x$. Theorems \ref{thm:1}~(iii) and \ref{thm:cnvxty}~(ii) together imply that contractible objects are acyclic.
	
	\section{Convexity} \label{sec:cnvxty}
	
	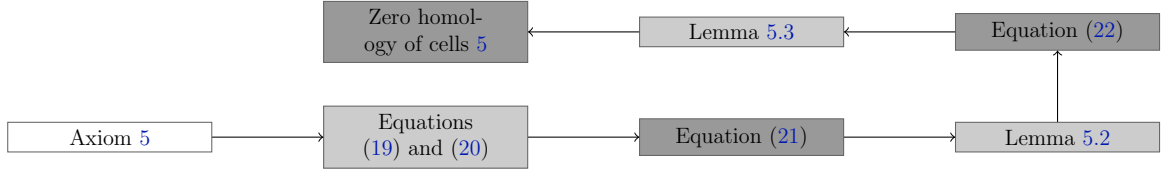
\begin{figure}
		\centering
		\begin{tikzpicture}[scale=0.7, transform shape]
	\node [style={rect6}] (P2) at (0\columnA, -3\rowA) {Zero homology of cells \ref{P:2}};
	\node [style={rect5}] (P7toP2) at (\columnA, -3\rowA) {Lemma \ref{lem:P7toP2}};
	\node [style={rect6}] (P7) at (2\columnA, -3\rowA) {Equation \eqref{eqn:P:7}};
	\node [style={rect5}] (P6toP7) at (2\columnA, -4\rowA) {Lemma \ref{lem:P6toP7}};
	\node [style={rect6}] (P6) at (\columnA, -4\rowA) {Equation \eqref{eqn:P:6}};
	\node [style={rect5}] (A4toP6) at (0\columnA, -4\rowA) {Equations \eqref{eqn:ConeNat:3} and \eqref{eqn:Base:2}};
	\node [style={rect2}] (A4) at (-1\columnA, -4\rowA) {Axiom \ref{A:convex}};
	\draw[-to] (P7toP2) to (P2);
	\draw[-to] (P7) to (P7toP2);
	\draw[-to] (P6toP7) to (P7);
	\draw[-to] (P6) to (P6toP7);
	\draw[-to] (A4toP6) to (P6);
	\draw[-to] (A4) to (A4toP6);
\end{tikzpicture}
		\caption{Convexity and acyclicity. The flowchart is an outline of the proof of Theorem \ref{thm:cnvxty}~(ii).}
		\label{fig:cnvxty}
	\end{figure}
	
	The main purpose of this section is to examine the consequences of convexity by verifying the claims of Theorem \ref{thm:cnvxty}. A major part of the discussion that follows is outlined in Figure \ref{fig:cnvxty}. Convexity is borne by the natural transformations in \eqref{eqn:def:Base} and \eqref{eqn:def:ConeNat}. The first one indicates the following commutations : 
	\begin{equation} \label{eqn:Base:1}
		\begin{tikzcd}
			\Hom_{ \calC } \paran{ F(n+1); X  } \arrow{d}[swap]{ \circ F \paran{ d_{n+1,0} } } \arrow{rrr}{ \circ F \paran{ d_{n+1,i+1} } } &&& \Hom_{ \calC } \paran{ F(n); X  } \arrow{d}{ \circ F \paran{ d_{n,0} } } \\ 
			\Hom_{ \calC } \paran{ F(n); X  } \arrow{rrr}{ \circ F \paran{ d_{n,i} } } &&& \Hom_{ \calC } \paran{ F(n-1); X  }
		\end{tikzcd} , \quad \forall 0\leq i \leq n .
	\end{equation}
	The commutation in \eqref{eqn:Base:1} follows from \eqref{eqn:smplc_id}~(i). Thus the $\Base$ transformation exists and is independent of $X$. We next examine the consequences of \eqref{eqn:def:ConeNat}. It implies the existence of a special collection of maps
	\begin{equation} \label{eqn:cone:a}
		\Cone^X_n : \Hom_{\calC} \paran{ F(n); X } \to \Hom_{\calC} \paran{ F(n+1); X } , \quad n\in \num_0 .
	\end{equation}
	which satisfy 
	\begin{equation} \label{eqn:ConeNat:3}
		\begin{tikzcd}
			\Hom_{ \calC } \paran{ F(n); X  } \arrow[d, "\Cone^{X}_n"'] \arrow{rrr}{ \circ F \paran{ d_{n,i} } } &&& \Hom_{ \calC } \paran{ F(n-1); X  } \arrow[d, "\Cone^{X}_{n-1}"] \\ 
			\Hom_{ \calC } \paran{ F(n+1); X  } \arrow{rrr}{ \circ F \paran{ d_{n+1,i+1} } } &&& \Hom_{ \calC } \paran{ F(n); X  }
		\end{tikzcd} , \quad 
		0\leq i\leq n
	\end{equation}
	and
	\begin{equation} \label{eqn:Base:2}
		\begin{tikzcd}
			\Hom_{\calC} \paran{ F(n); X } \arrow{d}[swap]{ \Cone^X(n) } \arrow[r, "="] & \Hom_{\calC} \paran{ F(n); X } \\ 
			\Hom_{\calC} \paran{ F(n+1); X } \arrow[ur, "\circ F\paran{ d_{n+1,0} }"']
		\end{tikzcd},
		\quad \forall n\in\num_0 .
	\end{equation}
	The set maps $\Cone^X_n$ from \eqref{eqn:cone:a} extend uniquely to homomorphisms on the freely generated Abelian groups 
	\[ \Cone^X_n : \Chain \paran{ X}_n \to \Chain \paran{ X}_{n+1} ; \quad \forall n\in\num_0 . \]
	Now observe that \eqref{eqn:ConeNat:3} and \eqref{eqn:Base:2} together imply 
	\begin{equation} \label{eqn:P:6}
		\forall \begin{tikzcd} F(n) \arrow[ "\sigma"', d ] \\ X	\end{tikzcd} , \quad 
		\Cone^{X}_n \paran{ \sigma } \circ F\paran{ d_{n+1,i} } \,=\, 
		\begin{cases}
			\Cone^{X}_{n-1} \paran{ \sigma \circ F \paran{ d_{n,i-1} } } & \mbox{ if } 1\leq i \leq n+1 \\
			\sigma & \mbox{ if } i=0
		\end{cases} ; \quad \forall n\in \num_0 .
	\end{equation}
	The following lemma is a summary of various discussions on convexity we have had so far in Section \ref{sec:cnvxty} :
	
	\begin{lemma} [Equivalent characterizations of convexity] \label{lem:cdi9}
		The following are equivalent for any object $X$ 
		\begin{enumerate} [(i)]
			\item $X$ is convex. 
			\item $X$ has associated with a cone construction in the sense of \eqref{eqn:def:ConeNat}.  
			\item $X$ has a sequence of maps as in \eqref{eqn:cone:a}, such that the identities in \eqref{eqn:ConeNat:3} and \eqref{eqn:Base:2} are satisfied.
			\item $X$ has a sequence of maps as in \eqref{eqn:cone:a}, such that the identities in \eqref{eqn:P:6} are satisfied. 
		\end{enumerate}
	\end{lemma}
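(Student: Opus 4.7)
The plan is to establish the chain (i) $\Leftrightarrow$ (ii) $\Leftrightarrow$ (iii) $\Leftrightarrow$ (iv), each link of which amounts to unpacking definitions rather than any substantive categorical argument.

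First, (i) $\Leftrightarrow$ (ii) is tautological: the paper defines $X$ to be \emph{convex} precisely when it admits a natural transformation $\Cone^X$ satisfying \eqref{eqn:def:ConeNat}, so nothing has to be shown here beyond quoting the definition of Axiom \ref{A:convex}.

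For (ii) $\Leftrightarrow$ (iii), I would unfold what a natural transformation between the two contravariant functors appearing in \eqref{eqn:def:ConeNat} actually consists of. The data is a family of components indexed by the objects of $\tilde\Delta^{op}$, which are exactly the set maps in \eqref{eqn:cone:a}, together with a naturality square for each morphism of $\tilde\Delta$, plus the equation that $\Cone^X$ is a right inverse of $\Base$. Since $\tilde\Delta$ is generated as a category by the face maps $d_{n,i}$, it suffices to impose naturality on these generators, which is precisely the content of \eqref{eqn:ConeNat:3}. The right-inverse condition $\Base\circ \Cone^X = \Id$, spelled out component-wise using the definition of $\Base$ in \eqref{eqn:def:Base}, is exactly \eqref{eqn:Base:2}. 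Thus (iii) is the component-wise rephrasing of (ii).

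For (iii) $\Leftrightarrow$ (iv), the observation is that the identities \eqref{eqn:P:6} are simply a combined packaging of \eqref{eqn:ConeNat:3} and \eqref{eqn:Base:2} by cases on $i$. Assuming (iii), the $i=0$ branch of \eqref{eqn:P:6} is literally \eqref{eqn:Base:2}, while the $1\leq i\leq n+1$ branch follows from \eqref{eqn:ConeNat:3} after the reindexing $i \mapsto i-1$. Conversely, given (iv), the $i=0$ case recovers \eqref{eqn:Base:2}, and the cases $i\geq 1$ recover all instances of \eqref{eqn:ConeNat:3}.

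The only ``hard'' part is bookkeeping: tracking the contravariant direction so that face maps in $\Delta$ act by pre-composition on hom-sets, and managing the off-by-one shift between the index in the naturality square \eqref{eqn:ConeNat:3} and the index in \eqref{eqn:P:6}. No new categorical input beyond standard facts about natural transformations and the generation of $\tilde\Delta$ by face maps is required.
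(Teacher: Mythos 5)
Your proposal is correct and matches the paper's treatment: the paper presents Lemma \ref{lem:cdi9} explicitly as ``a summary of various discussions'' in Section \ref{sec:cnvxty}, where (i)$\Leftrightarrow$(ii) is definitional, (iii) is the component-wise unpacking of the natural transformation and the right-inverse condition on $\Base$, and \eqref{eqn:P:6} is the case-by-case repackaging of \eqref{eqn:ConeNat:3} and \eqref{eqn:Base:2} with the index shift you describe. If anything, you are slightly more careful than the paper in noting that naturality need only be checked on the face-map generators of $\tilde\Delta$ and in spelling out the converse directions, which the paper leaves implicit.
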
	
	
	We can now analyze the first consequence of convexity directly at the level of homology :
	
	\begin{lemma} \label{lem:P6toP7}
		Suppose that an object $X$ has a sequence of maps $\Cone_n$ as in \eqref{eqn:cone:a}. Suppose that the maps satisfy \eqref{eqn:P:6}. Then 
		\begin{equation} \label{eqn:P:7}
			\partial_{n+1} \Cone_n + \Cone_{n-1} \partial_{n} = \Id_n; \quad \forall n\in \num_0 .
		\end{equation}
	\end{lemma}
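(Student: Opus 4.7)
The plan is to reduce the identity \eqref{eqn:P:7} to a statement on generators and then extract it from \eqref{eqn:P:6} by a signed bookkeeping argument. Since $\Chain(X)_n$ is the free abelian group on $\Hom_\calC(F(n);X)$ and both $\partial_*$ and $\Cone_*$ are $\integer$-linear by construction, it suffices to verify, for an arbitrary $\sigma : F(n) \to X$, the identity
\[ \partial_{n+1} \Cone_n(\sigma) \;+\; \Cone_{n-1}(\partial_n \sigma) \;=\; \sigma. \]

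First I expand the leading term using the alternating-sum definition of the boundary in $\Chain$:
\[ \partial_{n+1} \Cone_n(\sigma) \;=\; \sum_{i=0}^{n+1} (-1)^i\, \Cone_n(\sigma) \circ F\paran{d_{n+1,i}}. \]
Now I apply \eqref{eqn:P:6} summand by summand. The $i=0$ term collapses to $\sigma$ by the second case of \eqref{eqn:P:6}. For each $1 \leq i \leq n+1$, the first case rewrites the summand as $(-1)^i \Cone_{n-1}\bigl(\sigma \circ F(d_{n,i-1})\bigr)$. Reindexing with $j = i-1$ and pulling the overall sign outside gives
\[ -\sum_{j=0}^{n} (-1)^j\, \Cone_{n-1}\bigl(\sigma \circ F\paran{d_{n,j}}\bigr) \;=\; -\Cone_{n-1}(\partial_n \sigma), \]
where the last equality uses $\integer$-linearity of $\Cone_{n-1}$ together with the definition of $\partial_n \sigma$ as the alternating sum of right-compositions with the face maps. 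Rearranging yields the desired equation on $\sigma$, and hence on all of $\Chain(X)_n$ by linearity.

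The only structural subtlety lies in the edge case $n = 0$, where $\Cone_{-1}$ has no a priori meaning and $\partial_0$ is the zero map into a trivial group; under the usual conventions $\Cone_{-1} := 0$ and $\partial_0 := 0$, the identity collapses to $\partial_1 \Cone_0 = \Id_0$, which is exactly what the $i = 0$ instance of \eqref{eqn:P:6} supplies. I do not expect any real obstacle here: once \eqref{eqn:P:6} is available through Lemma \ref{lem:cdi9}, the proof is a two-line signed computation that is purely formal at the level of chain complexes. The slightly delicate point, if any, is simply being careful that the shift $j = i - 1$ in the reindexing converts the range $1 \leq i \leq n+1$ into the full range $0 \leq j \leq n$ required to reassemble $\partial_n \sigma$, and that the resulting sign is $-1$ rather than $+1$, which is what produces the correct chain-homotopy identity $\partial \Cone + \Cone \partial = \Id$ rather than a spurious variant.
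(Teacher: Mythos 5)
Your proposal is correct and follows essentially the same argument as the paper: expand $\partial_{n+1}\Cone_n(\sigma)$ as the alternating sum over face maps, peel off the $i=0$ term via the second case of \eqref{eqn:P:6}, apply the first case to the remaining summands, and reindex to recover $-\Cone_{n-1}(\partial_n\sigma)$. Your explicit treatment of the $n=0$ edge case and the sign bookkeeping is a minor but welcome addition of care over the paper's version.
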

	
	The lemma follows from the following simple algebraic calculation : 
	\[\begin{split}
		\partial_{n+1} \paran{ \Cone_n \sigma } &= \sum_{i=0}^{n+1} (-1)^i \paran{ \Cone_n \sigma } \circ F \paran{ d_{n+1,i} } = \sigma + \sum_{i=1}^{n+1} (-1)^i \paran{ \Cone_n \sigma } \circ F \paran{ d_{n+1,i} } \\
		&= \sigma + \sum_{i=1}^{n+1} (-1)^i \Cone_{n-1} \paran{ \sigma \circ F \paran{ d_{n+1,i-1} } } = \sigma + \Cone_{n-1} \paran{ \sum_{i=1}^{n+1} (-1)^i \sigma \circ F \paran{ d_{n+1,i-1} } } \\
		&= \sigma - \Cone_{n-1} \paran{ \partial_{n} \sigma } . \qed
	\end{split}\]
	Equation \eqref{eqn:P:7} distills the essential identity from the natural transformations $\Base$ and $\Cone$ needed to guarantee acyclicity : 
	
	\begin{lemma} \label{lem:P7toP2}
		Suppose that an object $X$ has a sequence of maps $\Cone_n$ as in \eqref{eqn:cone:a}, such that their extensions satisfy \eqref{eqn:P:7}. Then Property \ref{P:2} is satisfied.
	\end{lemma}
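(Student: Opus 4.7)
The plan is to interpret equation \eqref{eqn:P:7} as saying that the family $\{\Cone_n\}_{n \geq 0}$ is a chain homotopy from the identity chain map on $\Chain(X)$ to the zero chain map. Once that identification is made, acyclicity follows from the classical fact of homological algebra that chain-homotopic maps induce the same map on homology; since the identity and zero maps on $\Chain(X)$ differ by such a chain homotopy, they induce the same homomorphism on each $H_n$, which forces these homology groups to vanish.

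Concretely, I would give the one-line direct argument rather than invoking the general lemma. Let $z \in \Chain(X)_n$ be any $n$-cycle, so $\partial_n z = 0$. Evaluating \eqref{eqn:P:7} at $z$ yields
\[
z \;=\; \partial_{n+1}(\Cone_n z) \,+\, \Cone_{n-1}(\partial_n z) \;=\; \partial_{n+1}(\Cone_n z),
\]
so every cycle $z$ is in fact a boundary. Hence $Z_n = B_n$ and the quotient $H_n(\Chain(X)) = Z_n / B_n$ vanishes for every $n \geq 1$, which is the content of Property \ref{P:2} in positive degrees.

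The one point that requires care is the edge case $n = 0$, where the term $\Cone_{n-1}\partial_n$ is not strictly defined. The natural convention is to set $\Cone_{-1} = 0$, so that \eqref{eqn:P:7} in degree zero reduces to $\partial_1 \Cone_0 = \Id_0$. This forces every $0$-chain to lie in the image of $\partial_1$, so with the unreduced convention $Z_0 = \Chain(X)_0$ used in Section \ref{sec:hmlgy}, again $H_0(\Chain(X)) = 0$. I do not foresee any substantive obstacle: the proof is essentially a direct transcription of the classical ``chain contraction implies acyclic'' lemma, and all of the categorical work needed to produce the $\Cone_n$ satisfying \eqref{eqn:P:7} has already been carried out in the definition \eqref{eqn:def:ConeNat} and in Lemma \ref{lem:P6toP7}.
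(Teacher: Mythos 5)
Your proposal is correct and takes essentially the same route as the paper's own proof: evaluate \eqref{eqn:P:7} on a cycle, note that the $\Cone_{n-1}\partial_n$ term vanishes, and conclude that the cycle is the boundary $\partial_{n+1}\Cone_n z$. Your extra attention to the degree-zero convention $\Cone_{-1}=0$ is a detail the paper leaves implicit but does not alter the argument.
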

	
	\begin{proof} It has to be shown that any cycle in $\Chain(X)_n$ is also a boundary. Take any $\sigma\in \Chain(X)_n$. Suppose $\sigma$ is a cycle, i.e., $\partial_{n} \sigma = 0$. Then according to \eqref{eqn:P:7}, $\gamma = \partial_{n+1} \Cone_n \sigma$, i.e., $\gamma$ is a boundary, as required for the completion of the proof. 
	\end{proof}
	
	Thus we have established Theorem \ref{thm:cnvxty}~(ii) We next focus on Claim ~(i). For that purpose, suppose that $X$ and $Y$ are two convex objects, with cones $\Cone^X$ and $\Cone^Y$ respectively. Note that any morphism in the Hom-set $\Hom_{\calC} \paran{ F(n) ; X\times Y }$ is a unique categorical product of morphisms in $\Hom_{\calC} \paran{ F(n) ; Y }$ and $\Hom_{\calC} \paran{ F(n) ; X }$ respectively. Thus it is enough to prescribe $\Cone_n^{X\times Y}$ on product morphisms only. Consider the following commuting diagram : 	
	\[\begin{tikzcd}
		&& && X\\
		&& F(n+1) \arrow{rru}{ \Cone^X_n (\alpha) } \arrow{rrd}[swap]{ \Cone^Y_n (\beta) } \arrow[Shobuj]{rrrr}{ \left\langle \Cone^X_n (\alpha), \Cone^Y_n (\beta) \right\rangle } && && X\times Y \arrow[ull, "\proj_1"'] \arrow[dll, "\proj_2"] \\
		&& && Y
	\end{tikzcd}\]
	The green arrow  is the product of two cones, and the commutations represent its relationship with its two coordinate projections. We are going to set 
	\begin{equation} \label{eqn:ConeProd}
		\forall \alpha \in \Hom_{ \calC } \paran{ F(n); X }, \; \forall \beta \in \Hom_{ \calC } \paran{ F(n); Y }, \quad \Cone_n^{X\times Y} \paran{ \alpha \times \beta } = \left\langle \Cone_n^X(\alpha) , \Cone_n^Y (\beta) \right\rangle , \quad \forall n\in \num_0 .
	\end{equation}
	To check that this is indeed a cone construction for $X\times Y$, \eqref{eqn:cone:a} has to be verified. For that purpose, we append a face map to the left of the commutation diagram to get :
	\[\begin{tikzcd}
		&& && X\\
		F(n) \arrow[rr, "F(d_{n+1,i})", Shobuj] \arrow[Holud, dashed, bend left=20, urrrr] \arrow[Holud, dashed, bend right=20, drrrr] && F(n+1) \arrow{rru}{ \Cone^X_n (\alpha) } \arrow{rrd}[swap]{ \Cone^Y_n (\beta) } \arrow[Shobuj]{rrrr}{ \left\langle \Cone^X_n (\alpha), \Cone^Y_n (\beta) \right\rangle } && && X\times Y \arrow[ull, "\proj_1"'] \arrow[dll, "\proj_2"] \\
		&& && Y
	\end{tikzcd}\]
	The dashed yellow arrows represent compositions. Thus the composite of the green arrows is the product of the yellow arrows. Now if $i=0$ then by \eqref{eqn:P:6} the yellow arrows are $\alpha$ and $\beta$ respectively. This means that 
	\[ \left\langle \Cone_n^X(\alpha) , \Cone_n^Y (\beta) \right\rangle \circ F \paran{ d_{n+1,0} } = \left\langle \alpha , \beta \right\rangle .\]
	If $1\leq i \leq n+1$, then again by \eqref{eqn:P:6} the yellow arrows are $\Cone^{X}_{n-1} \paran{ \alpha \circ F \paran{ d_{n,i-1} } }$ and $\Cone^{Y}_{n-1} \paran{ \beta \circ F \paran{ d_{n,i-1} } }$ respectively. This means that 
	\[\begin{split}
		& \left\langle \Cone_n^X(\alpha) , \Cone_n^Y (\beta) \right\rangle \circ F \paran{ d_{n+1,i} } = \left\langle \Cone^{X}_{n-1} \paran{ \alpha \circ F \paran{ d_{n,i-1} } } , \Cone^{Y}_{n-1} \paran{ \beta \circ F \paran{ d_{n,i-1} } } \right\rangle \\
		& \quad \quad = \Cone^{X}_{n-1} \paran{ \left\langle \alpha \circ F \paran{ d_{n,i-1} } , \beta \circ F \paran{ d_{n,i-1} } \right\rangle }, \quad \mbox{by definition}, \\
		& \quad \quad =  \Cone^{X}_{n-1} \paran{ \left\langle \alpha , \beta \right\rangle } \circ F \paran{ d_{n,i-1} } .
	\end{split}\]
	Thus \eqref{eqn:ConeProd} indeed provides a cone construction for $X\times Y$. So $X\times Y$ is convex, and Claim (ii) is true. This completes the proof of Theorem \ref{thm:cnvxty}. \qed 
	
	This completes the discussion on how convexity implies acyclicity. Thus all the ingredients of Theorem \ref{thm:1} are in place. While it is easy to conceive of functors \eqref{eqn:functorF}, it is a non-trivial task to satisfy its axioms. Axiom \ref{A:convex} is the most challenging task to be verified for a general functor. 
	

	\section{Conclusions} \label{sec:conclus}
	
	We have thus seen that in any category $\calC$, a concept of a 1-cell $F_1$ and boundary  maps from the terminal object into the 1-cell creates a notion of homotopy. Figure \ref{fig:hmtpy_outline} summarizes how under Axioms \ref{A:1_0_cell}, \ref{A:swap} and \ref{A:F1_join} on the 1-cell, the relation of homotopy is an equivalence relation which is invariant under composition. The key connection between homology and homotopy is the homotopy invariance of homology. This is enabled by Axiom \ref{A:convex}. Section \ref{sec:cnvxty} shows how convexity implies acyclicity, a key component for the connection discussed. 
	
	In summary, an arbitrary category $\calC$ can under certain conditions be equipped with notions of homotopy and homology which play the analogous as the case when $\calC= \Topo$. The conditions expressed in Axioms \ref{A:C} -- \ref{A:convex} capture certain aspects of $\Topo$. Among these, Axiom \ref{A:convex} on convexity is the hardest to realize. A homotopy invariant notion of homology provides a gateway for all theorems, techniques and numerical methods for Topology \cite[]{BubenikMilicevic2021, BubenikScott2014prstnt, BauerLesnick2020prsstnc, OtterEtAl2017roadmap} to be applied to a different category $\calC$. We end the paper with a brief discussion of various related notions from Topology that may also be realized in the general category $\calC$.
	
	\paragraph{Spheres and disks} We begin with a generalization of the notion of a \emph{sphere} and a \emph{disk}. In our general category $\calC$, the cell $F_{k+1}$ takes the place of the $k+1$-dimensional disk $D^{k+1}$, In $\Topo$ a $k$-dimensional sphere is the boundary of $(k+1)$-cell $D^{k+1}$. The sphere can be built by joining a collection of $k$-disks along their $(k-1)$-dimensional boundaries. This corresponds to a colimit of a diagram in $\Topo$, as shown below :
	\[\begin{tikzcd}
		&&& \begin{array}{c} (k+1)-\mbox{disk} \\ D^{k+1} = F_{k+1} \end{array} \\
		\braces{\begin{array}{c} \mbox{Diagram involving} \\ k-1 \mbox{ and } k \mbox{ dimensional cells}
		\end{array} } \arrow[rrr, Rightarrow, "\mbox{limiting}", "\mbox{co-cone}"'] \arrow[urrr, bend left=15, pos=0.2, "\mbox{inclusions}", Rightarrow] &&& \begin{array}{c} k-\mbox{sphere} \\ S^{k} \end{array} \arrow[u, Rightarrow, "\subset_k"'] 
	\end{tikzcd}\]
	One can create an analogous diagram in $\calC$ with the same combinatorial content. The colimit of the diagram will be denoted as $S^k$ and interpreted as a $k$-dimensional sphere. The inclusion indicated by $\subset_k$ arises naturally as a consequence of the universality of colimits. 
	
	\paragraph{CW-complexes} The notion of boundary of a cell enables the important notion of \textit{handle-attachment} or cell-gluing, Given a $\calC$-object $X$ and and integer $k\in \num$, a \emph{$k$-handle attachment} to $X$ is a colimit / pushout of the following diagram
	\[\begin{tikzcd}
		&& F_k \arrow[drr] \\
		S^{k-1} \arrow[urr, "\subset_k"] \arrow[drr, "\alpha"'] && && X \sqcup_{\alpha} F_k \\
		&& X \arrow[urr]
	\end{tikzcd}\]
	In the diagram above, $\alpha$ is any morphism between the corresponding objects. The resulting colimit object $X \sqcup_{\alpha} F_k$ thus depends on the choice of $\alpha$. One can attach an arbitrary number of $k$-handles in this manner to $X$. The construction can be expressed by a similar colimit diagram
	\[\begin{tikzcd}
		& & && F_k \arrow[dddrr] \\
		& & && \vdots \\
		& & && F_k \arrow[drr] \\
		S^{k-1} \arrow[uuurrrr, "\subset_k"] \arrow[drrrr, "\alpha_m"'] & \cdots & S^{k-1} \arrow[urr, "\subset_k"] \arrow[drr, "\alpha_1"] && && X \sqcup_{\alpha_1} F_k \cdots \sqcup_{\alpha_m} F_k \\
		& & && X \arrow[urr]
	\end{tikzcd}\]
	A \textit{CW complex} or \emph{cellular complex} \cite{Whitehead1949combin} in $\calC$ is constructed by taking the union of a sequence of $\calC$-objects
	\[ X_{0} \xrightarrow{\subset} X_{1} \xrightarrow{\subset} \cdots X_{k-1} \xrightarrow{\subset} X_{k} \xrightarrow{\subset} \cdots \]
	in which $X_{k-1}$ is a sub-object of $X_{k}$. The object $X_k$ is created recursively out of $X_{k-1}$ by taking a collection of $k$-cells $\SetDef{ F^{(i)}_k }{i\in I}$ and gluing each of them to $X_{k-1}$. These maps are called \emph{attachment maps}. The associated cellular complex $X$ is the colimit of the above sequence. If the sequence is finite and ends at some $X_N$, $N$ is called the dimension of the cellular object. The $k$-th object $X_k$ is called the $k$-th skeleton.
	
	\paragraph{Cell complexes} A finite category $J$ will be called a \emph{cell-complex diagram} if it satisfies the following properties : 
	\begin{enumerate} [(i)]
		\item The objects of $J$ can be partitioned into levels $J^{(1)}, \ldots, J^{(L)}$.
		\item The morphisms in $J$ are generated by arrows that connect an $l$-level object to an $(l+1)$-level object.
	\end{enumerate}
	A \emph{ cellular complex } in $\calC$ is a functor from $\Phi : J \to \calC$ such that
	\begin{enumerate} [(i)]
		\item $\Phi$ maps each $l$-level object into the object $F(l)$.
		\item Each morphism $f$ in $J$ that connects an $l$-level object to an $(l+1)$-level object, is mapped into a face map $F \paran{ d_{l+1,j} }$ for some $0\leq j \leq l+1$.
	\end{enumerate}
	Thus a cellular complex is a specific type of a finite diagram in $\calC$. It respects the cells and face maps prescribed by the functor $F$ from \eqref{eqn:functorF}. By a reuse of terminology, we say that an object $X \in \calC$ is a cellular complex if there is a cellular complex $\Phi : J \to \calC$ such that $X = \colim \Phi$. In this case, the functor $\Phi$ will be called a \emph{cellular decomposition} of the object $X$ and $L$ its \emph{dimension}. 
	
	Thus the notion of cells and boundary maps lead to the notion of spheres and disks. These in turn make possible the notions of CW-complexes and cellular complexes.
	
	\paragraph{Reconstruction} There are certain situations in which the first few homology groups of $\calC$-objects may be calculable, from data or other inputs. Let $\SqBrack{ L-\text{complex} }$ denote the subcategory of $\calC$ composed of $L$-dimensional complexes of any of the two types described above. Now consider the following diagram :
	\begin{equation} \label{eqn:TDA:scheme}
		\begin{tikzcd} [row sep = large]
			\calC \arrow[d, "\text{Homology}"'] && \SqBrack{ L-\text{simplex} } \arrow[dashed, Holud]{d}[name=n1]{} \arrow[ll, "\colim"'] \\
			\AbelCat^{\num} \arrow[rr, "\proj_L"'] && \AbelCat^{L} \arrow[bend right=60, Shobuj]{u}[swap, name=n2]{ \text{Reconstruct} }
			\arrow[shorten <=0.1pt, shorten >=4pt, -Bar, to path={(n1) to[out=0,in=180] node[xshift=10pt]{} (n2)} ]{  }
		\end{tikzcd}
	\end{equation}
	The left loop shows the interpretation of $L$ -simplices as $\calC$-objects, the calculation of their Homology, and finally restricting them to the their first $L$ indices. These signature of any $\calC$ object may be directly computable from data. The task of a data-driven reconstruction may be interpreted as the creation of the functor $\text{Reconstruct}$ shown in green, which must be the right adjoint to the composite functor shown in dashed yellow. This diagram presents how a Homology theory enables a reconstruction scheme for objects in terms of complexes, with the homology groups being the objectives.
	
	Yet another proposition is to prove a categorical version of the Hurewicz homomorphism $h_n: \pi_n(X) \to H_n(X)$ and the \textit{Hurewicz Theorem}. This requires an expansion to higher homotopy groups using the higher order cells. A related task is a categorical version of the \textit{Whitehead Theorem} which establishes algebraic conditions for morphisms between CW complexes to be homotopy equivalences. These two theorems would provide a complete cellular theory for categories of more generality.
	
	\section{Appendix} \label{sec:proofs}
	
	\subsection{Constructions from categorical products}
	
	In this short section we briefly review some commutations created by categorical products. The following commutation can be taken as the definition of categorical products of morphisms :
	\begin{equation} \label{eqn:od3l9}
		\begin{tikzcd} a \arrow[d, "f"'] \\ b \end{tikzcd} , \; 
		\begin{tikzcd} a' \arrow[d, "f'"'] \\ b' \end{tikzcd} 
		\imply 
		\begin{tikzcd}
			& a \arrow[r, "f"] & b \\
			a\times a' \arrow[rrr, "f\times f'", dotted] \arrow[ur, "\proj_2", Akashi] \arrow[dr, "\proj_1"', Akashi]  &&& b\times b' \arrow[dl, "\proj_2", Holud] \arrow[ul, "\proj_1"', Holud] \\
			& a' \arrow[r, "f'"'] & b'
		\end{tikzcd}
	\end{equation}
	The blue and yellow sub-diagrams are limiting cones, and the central dotted morphism is the unique morphism by which they transform. Equation \eqref{eqn:od3l9} shows how products extend from objects to morphisms. In fact, this is functorial : 
	\begin{lemma} [Categorical product as a functor] \label{lem:product_id}
		In any category $\calS$ with finite products, for every $n\in \num$, the $n$-fold product is a functor from $\calS^n \to \calS$. Moreover, if $\calS$ has a terminal element $1_{\calS}$, then $1_{\calS} : \calS \to \calS$ is the identity functor.
	\end{lemma}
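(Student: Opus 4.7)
The plan is to build the $n$-fold product functor directly from the universal property of finite products, and then to establish the second claim by exhibiting a natural isomorphism between $-\times 1_\calS$ and the identity. The result is a standard consequence of the uniqueness of factorizations through a limiting cone, so the work is bookkeeping rather than combinatorics.

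First I would fix, for each $n$-tuple $(a_1,\ldots,a_n) \in \operatorname{ob}(\calS^n)$, a chosen product object $a_1 \times \cdots \times a_n$ with its projections $\proj_i$; this defines the functor on objects. To define it on morphisms, given $f_i : a_i \to b_i$ for $1 \leq i \leq n$, I would observe that the family $\{ f_i \circ \proj_i \}_{i=1}^n$ is a cone over the discrete diagram $(b_1,\ldots,b_n)$ with apex $a_1 \times \cdots \times a_n$, and so factors through the limiting cone at $b_1 \times \cdots \times b_n$ by a unique morphism, which I call $f_1 \times \cdots \times f_n$. This is the $n$-fold generalization of diagram \eqref{eqn:od3l9}.

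The functorial identities follow from the uniqueness clause of the universal property. For identity preservation, $\Id_{a_1 \times \cdots \times a_n}$ itself satisfies the factorization condition defining $\Id_{a_1} \times \cdots \times \Id_{a_n}$, hence the two morphisms coincide. For compositionality, given $g_i : a_i \to b_i$ and $f_i : b_i \to c_i$, both $(f_1 \times \cdots \times f_n)\circ(g_1 \times \cdots \times g_n)$ and $(f_1\circ g_1)\times \cdots \times (f_n\circ g_n)$ are morphisms $a_1 \times \cdots \times a_n \to c_1 \times \cdots \times c_n$ whose post-composition with $\proj_i$ yields $f_i\circ g_i\circ \proj_i$; uniqueness forces equality. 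This proves that the $n$-fold product is a functor $\calS^n \to \calS$.

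For the second claim, I would observe that for any object $a$, the pair $(\Id_a, \,!_a : a \to 1_\calS)$ is a cone over $(a, 1_\calS)$ with apex $a$, and that $\proj_1 : a \times 1_\calS \to a$ is an isomorphism with inverse given by the unique factorization of this cone through $a\times 1_\calS$. Naturality in $a$ again follows from uniqueness of factorizations, yielding a natural isomorphism $-\times 1_\calS \;\cong\; \Id_\calS$. If one chooses product representatives so that $a\times 1_\calS := a$ for every $a$ (which is permissible since products are defined only up to canonical isomorphism), this natural isomorphism becomes the identity, and the functor $-\times 1_\calS$ is literally $\Id_\calS$.

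The only place that demands any care is the consistent choice of product representatives across varying $n$, so that the functorial identities hold on the nose rather than merely up to canonical isomorphism; otherwise one works with the product functor only up to natural isomorphism, which is sufficient for every application in this paper. I expect no substantive obstacle beyond this bookkeeping.
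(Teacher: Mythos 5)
Your proposal is correct and follows essentially the same route the paper takes: the paper's own treatment consists of diagram \eqref{eqn:od3l9} (defining products of morphisms via the universal property) and \eqref{eqn:iojd9} (the projection $a\times 1_{\calS}\to a$ as a canonical isomorphism), with the functoriality and the identification with $\Id_{\calS}$ left to the standard uniqueness-of-factorization argument you spell out. Your added remark about choosing representatives so that the identities hold strictly rather than up to natural isomorphism is a reasonable piece of bookkeeping that the paper glosses over.
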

	The next commutation relation that we need involves commutation with the terminal element $1_{\calC}$ of $\calC$ : 
	\begin{equation} \label{eqn:iojd9}
		\begin{tikzcd}
			a \arrow[dr, "="'] \arrow[r, "\cong"] & a \times 1_{\calC} \arrow[d, "\proj_1"] \\
			& a
		\end{tikzcd}
	\end{equation}
	It is a well known fact \citep[e.g.][Sec 4]{Riehl_context_2017} that products with the terminal object is an identity operation. The second diagonal equality arrow follows from the universality of the limit. We use these two basic commutations to derive some useful results required in our proofs.

	\subsection{Some technical lemmas}
	
	Throughout this section, we assume an object $X\in \calC$ and a morphism $\sigma:F(0)\to X$. Using a combination of \eqref{eqn:od3l9} and \eqref{eqn:iojd9} we can draw the following diagram involving $X, F(0), F(1)$ and $\sigma$ : 
	\[\begin{tikzcd}
		&& & & F(1) \\
		F(0) \arrow[urrrr, "F\paran{ d_{1,j} }", bend left=20] \arrow[rr, "F\paran{ d_{1,j} }"] && F(1) \arrow[rru, bend left=20, "="] \arrow[r, "="] & F(1) \times F(0) \arrow[dr, "\proj_2"'] \arrow[ur, bend left=10, "\proj_1"] \arrow{rrr}{ F(1) \times \sigma } &&& F(1)\times X \arrow[ull, "\proj_1"'] \arrow[dl, "\proj_2"] \\
		&& & & F(0) \arrow[r, "\sigma"] & X
	\end{tikzcd}\]
	Note that the following two dotted arrows can be added to this diagram : 
	\[\begin{tikzcd}
		&& & & F(1) \\
		F(0) \arrow[urrrr, "F\paran{ d_{1,j} }", bend left=20] \arrow[rr, "F\paran{ d_{1,j} }"] && F(1) \arrow[rru, bend left=20, "="] \arrow[r, "="] & F(1) \times F(0) \arrow[dr, "\proj_2"'] \arrow[ur, bend left=10, "\proj_1"] \arrow{rrr}{ F(1) \times \sigma } &&& F(1)\times X \arrow[ull, "\proj_1"'] \arrow[dl, "\proj_2"] \\
		&& & & F(0) \arrow[r, "\sigma"] & X \\
		&& & & & & F(0) \times X \arrow[ul, "\proj_2"', dotted] \arrow[uullllll, bend left=30, "\proj_1", dotted]
	\end{tikzcd}\]
	Next, by utilizing the terminal property of $F(0)$, we can add the following dotted diagram :
	\[\begin{tikzcd}
		&& & & F(1) \\
		F(0) \arrow[drrrr, bend right=10, "="', dotted] \arrow[urrrr, "F\paran{ d_{1,j} }", bend left=20] \arrow[rr, "F\paran{ d_{1,j} }"] && F(1) \arrow[rru, bend left=20, "="] \arrow[r, "="] & F(1) \times F(0) \arrow[dr, "\proj_2"'] \arrow[ur, bend left=10, "\proj_1"] \arrow{rrr}{ F(1) \times \sigma } &&& F(1)\times X \arrow[ull, "\proj_1"'] \arrow[dl, "\proj_2"] \\
		&& & & F(0) \arrow[r, "\sigma"] & X \\
		&& & & & & F(0) \times X \arrow[ul, "\proj_2"'] \arrow[uullllll, bend left=30, "\proj_1"] \arrow[bend right=20]{uu}[swap]{F\paran{ d_{1,j} } \times \Id_{X}}
	\end{tikzcd}\]
	We next add an instance of \eqref{eqn:iojd9} involving $F(0)$ : 
	\[\begin{tikzcd}
		&& & & F(1) \\
		F(0) \arrow[drrrr, bend right=10, "="'] \arrow[urrrr, "F\paran{ d_{1,j} }", bend left=20] \arrow[rr, "F\paran{ d_{1,j} }"] && F(1) \arrow[rru, bend left=20, "="] \arrow[r, "="] & F(1) \times F(0) \arrow[dr, "\proj_2"'] \arrow[ur, bend left=10, "\proj_1"] \arrow{rrr}{ F(1) \times \sigma } &&& F(1)\times X \arrow[ull, "\proj_1"'] \arrow[dl, "\proj_2"] \\
		&& & & F(0) \arrow[r, "\sigma"] & X \\
		&& & & & X \arrow[u, "="] \arrow[r, "="'] & F(0) \times X \arrow[ul, "\proj_2"'] \arrow[uullllll, bend left=30, "\proj_1"] \arrow[bend right=20]{uu}[swap]{F\paran{ d_{1,j} } \times \Id_{X}}
	\end{tikzcd}\]
	Now note that we can add another instance of $\sigma : F(0) \to X$ to this diagram : 
	\[\begin{tikzcd}
		&& & & F(1) \\
		F(0) \arrow[drrrr, bend right=10, "="', Itranga] \arrow[urrrr, "F\paran{ d_{1,j} }", bend left=20] \arrow[rr, "F\paran{ d_{1,j} }", Itranga] && F(1) \arrow[rru, bend left=20, "="] \arrow[r, "=", Itranga] & F(1) \times F(0) \arrow[dr, "\proj_2"'] \arrow[ur, bend left=10, "\proj_1"] \arrow[Itranga]{rrr}{ F(1) \times \sigma } &&& F(1)\times X \arrow[ull, "\proj_1"'] \arrow[dl, "\proj_2"] \\
		&& & & F(0) \arrow[r, "\sigma"] \arrow[dr, "\sigma"', Itranga] & X \\
		&& & & & X \arrow[u, "="] \arrow[r, "="', Itranga] & F(0) \times X \arrow[ul, "\proj_2"'] \arrow[uullllll, bend left=30, "\proj_1"] \arrow[bend right=20, Itranga]{uu}[swap]{F\paran{ d_{1,j} } \times \Id_{X}}
	\end{tikzcd}\]

	The final picture \eqref{eqn:kubsd9} has some morphisms traced out in red. Together these create the commutation relations :
	\begin{equation} \label{eqn:kubsd9}
		\lambda^X_j \circ \sigma = \paran{ F\paran{ d_{1,j} } \times \Id_{X} } \circ \sigma = \paran{ F(1) \times \sigma } \circ F\paran{ d_{1,j} } , \quad j=0,1 .
	\end{equation}
	One can similarly show that
	\begin{equation} \label{eqn:sl5g}
		\paran{ F(1) \times \sigma } \circ \lambda^{F(n)}_i = \lambda^X_j \circ \sigma : F(n) \to X\times F(1), \quad \forall n\in \num_0, \; j=0,1.
	\end{equation}
	These will be key to proving Lemma \ref{lem:P2_to_P5} later. The next useful result is based on the functorial property of chain $\Chain$ : 
	\begin{equation} \label{eqn:chain:1}
		\begin{tikzcd} A \arrow[d, "f"'] \\ B \end{tikzcd} \imply 
		\begin{tikzcd}
			\Chain (A)_n \arrow{d}[swap]{ \Chain(f)_n } \arrow[r, "\partial^{A}_n"] & \Chain (A)_{n-1} \arrow{d}{ \Chain(f)_{n-1} } \\ 
			\Chain (B)_n \arrow[r, "\partial^{B}_n"'] & \Chain (B)_{n-1}
		\end{tikzcd}
	\end{equation}
	The commutation on the right is an outcome of an arbitrary morphism $f$ in $\calC$ as shown on the right. 
	
	\subsection{Proof of Lemma \ref{lem:P2_to_P5}} \label{sec:proof:P2_to_P5}
	
	The proof follows the argument in \citep[][Thm 4.23]{Rotman2013algtopo}. The proof will be by induction : 
	
	\paragraph{Inductive hypothesis} For every object $X\in \calC$, there are maps $P^{X}_{n} : \Chain(X)_n \to \Chain\paran{ X\times F(1) }_{n+1}$ as in Property \ref{P:5} which satisfy the identities \eqref{eqn:P:5a}. Moreover, the following commutations hold :
	\begin{equation} \label{eqn:P:5b}
		\begin{tikzcd}
			\Chain\paran{ F(n) }_n \arrow[rr, "P^{F(n)}_n"] \arrow[d, "\sigma\circ"'] && \Chain\paran{ F(n) \times F(1) }_{n+1} \arrow{d}{ \paran{\sigma\times \Id_F(1)} \circ } \\
			\Chain(X)_n \arrow[rr, "P^{X}_n"'] && \Chain\paran{ X \times F(1) }_{n+1}
		\end{tikzcd} 
		, \quad \forall \sigma \in \Hom_{\calC} \paran{ F(n); X } .
	\end{equation}
	Note that the inductive hypothesis for proving Property \ref{P:5} is stronger than Property \ref{P:5} itself.
	
	\paragraph{Base case} We begin with the case $n=0$. Now define
	\begin{equation} \label{eqn:def:PX0}
		P^X_0 : \Hom_{\calC} \paran{ F(0); X } \to \Hom_{\calC} \paran{ F(1); X \times F(1) } , \quad \sigma \mapsto 
		\begin{tikzcd}
			F(1) \arrow[dr, dashed] \arrow[r, "\cong"] & 1_{\calC} \times F(1) 
			\arrow[d, "F(1) \times \sigma"] \\
			& X \times F(1)
		\end{tikzcd}
	\end{equation}
	The boundary map on this transform behaves as :
	\[\begin{split}
		\partial_1 \paran{ P^X_0 \sigma } &= \sum_{i=0}^{1} (-1)^i \paran{ P^X_0 \sigma } \circ F\paran{ d_{1,i} } = \paran{ P^X_0 \sigma } \circ F\paran{ d_{1,0} } - \paran{ P^X_0 \sigma } \circ F\paran{ d_{1,1} } \\
		&= \lambda^X_1 \circ \sigma - \lambda^X_0 \circ \sigma, \quad \mbox{by \eqref{eqn:kubsd9}} , \\
		&= \paran{ \lambda^X_1 }_{*} \sigma - \paran{ \lambda^X_1 }_{*} \sigma .
	\end{split}\]
	This equality is simplified version of \eqref{eqn:P:5a} for the case $n=0$. Next, the commutation \eqref{eqn:P:5b} becomes :
	\[\begin{tikzcd}
		\Chain\paran{ F(0) }_0 \arrow[rr, "P^{F(0)}_0"] \arrow[d, "\sigma\circ"'] && \Chain\paran{ F(0) \times F(1) }_{n+1} \arrow{d}{ \paran{ \sigma\times \Id_F(1) } \circ } \\
		\Chain(X)_n \arrow[rr, "P^{X}_0"'] && \Chain\paran{ X \times F(1) }_{1}
	\end{tikzcd} \]
	Note that the collection $\Chain\paran{ F(0) }_0$ has only one member -- the identity morphism $\Id_{F(0)}$. The counter-clockwise path of the above figure is
	\[P^{X}_0 \paran{ \sigma\circ \Id_{F(0)} } = P^{X}_0 \paran{ \sigma } = "F(1) \times \sigma , \]
	by \eqref{eqn:def:PX0}. The clockwise path is 
	\[ \paran{ \sigma\times \Id_F(1) } \circ \paran{ P^{F(0)}_0 \paran{ \Id_{F(0)} }  } = \paran{ \sigma\times \Id_F(1) } \circ \paran{ F(1) \times \Id_{F(0)} } = \sigma\times \Id_F(1) .\]
	Thus the two paths are the same and the verification of the case $n=0$ has been complete.
	
	\paragraph{Inductive step} We now assume that there is an $N\geq 0$ such that \eqref{eqn:P:5b} and  \eqref{eqn:P:5b} are true for all $n\leq N$. Our focus will be on the homomorphism
	\[ Q_N := \Chain\paran{ \lambda^{F(N)}_1 }_{N} - \Chain\paran{ \lambda^{F(N)}_0 }_{N} - P^{F(N)}_{N-1} \partial_{N} : \Chain \paran{ F(N) }_N \to \Chain \paran{ F(N) \times F(1) }_{N+1} . \]
	\[\begin{split}
		& \partial_N Q_N = \partial_{N} \circ \paran{ \Chain\paran{ \lambda^{F(N)}_1 }_{N} - \Chain\paran{ \lambda^{F(N)}_0 }_{N} - P^{F(N)}_{N-1} \partial_{N} } \\
		& \quad \quad = \partial_{N} \circ \Chain\paran{ \lambda^{F(N)}_1 }_{N} - \partial_{N} \circ \Chain\paran{ \lambda^{F(N)}_0 } - \partial_{N} \circ P^{F(N)}_{N-1} \partial_{N} \\
		& \quad \quad = \Chain\paran{ \lambda^{F(N)}_1 }_{N} \circ \partial_{N} - \Chain\paran{ \lambda^{F(N)}_0 }_{N} \circ \partial_{N} - \partial_{N} \circ P^{F(N)}_{N-1} \circ \partial_{N}, \quad \mbox{by \eqref{eqn:chain:1} } .
	\end{split}\]
	We shall now use our inductive assumption and replace the term $\partial_N P^{F(N)}_{N-1}$ by
	\[P^{F(N)}_{N-1} \partial_N = \Chain\paran{ \lambda^{F(N)}_1 }_N - \Chain\paran{ \lambda^{F(N)}_0 }_N + \partial_{N+1} P^{F(N)}_{N} .\]
	As a result we get :
	\[\begin{split}
		& \SqBrack{ \Chain\paran{ \lambda^{F(N)}_1 }_{N} - \Chain\paran{ \lambda^{F(N)}_0 }_{N} - P^{F(N)}_{N-1} \partial_{N} } \circ \partial_{N} \\
		& \quad \quad = \Chain\paran{ \lambda^{F(N)}_1 }_{N} \circ \partial_{N} - \Chain\paran{ \lambda^{F(N)}_0 }_{N} \circ \partial_{N} \\
		& \quad \quad - \left[ \Chain\paran{ \lambda^{F(N)}_1 }_N - \Chain\paran{ \lambda^{F(N)}_0 }_N + \partial_{N+1} P^{F(N)}_{N} \right] \circ \partial_{N} \\
		& \quad \quad = 0.
	\end{split}\]
	This indicates that the entire image of the homomorphism $Q_N$ are cycles. We shall consider the image under $Q_N$ of the trivial morphism $\Id_{F(N)}$. This image $Q_N \paran{ \Id_{F(N)} }$ is an $N$-cycle of $F(N) \times F(1)$. By Axiom \ref{A:convex} both $F(1)$ and $F(N)$ are convex. Therefore by Theorem \ref{thm:cnvxty}~(i), $F(N) \times F(1)$ is convex too. Then by by Theorem \ref{thm:cnvxty}~(ii) $F(N) \times F(1)$ has zero homology groups, i.e., each cycle is also a boundary. Thus there is $\beta_{N+1} \in \Chain \paran{ F(N) \times F(1) }_{N+1} $ such that 
	\begin{equation} \label{eqn:pdm39}
		\partial_{N+1} \beta_{N+1} = Q_N \paran{ \Id_{F(N)} }
	\end{equation}
	The map $P^{X}_{N+1}$ can now be defined as : 
	\begin{equation} \label{eqn:def:PXN}
		P^{X}_{N+1} : \Chain(X)_{N+1} \to \Chain\paran{ X\times F(1) }_{N+2} , \quad \sigma \mapsto \paran{ \sigma \times F(1) } \circ \beta_{N+1} .
	\end{equation}
	\[\begin{split}
		\partial_{N+1} P^{X}_{N+1} (\sigma) &= \partial_{N+1} \paran{ \sigma \times F(1) } \circ \beta_{N+1}, \quad \mbox{by \eqref{eqn:def:PXN}}, \\
		&= \partial_{N+1} \Chain \paran{ \sigma \times F(1) }_{N+2} \paran{ \beta_{N+1} } , \quad \mbox{by \eqref{eqn:chain:1} }, \\
		&= \Chain \paran{ \sigma \times F(1) }_{N+2} \partial_{N+1} \paran{ \beta_{N+1} } , \quad \mbox{by \eqref{eqn:chain:1} } ,\\
		&= \Chain \paran{ \sigma \times F(1) }_{N+2} Q_N \paran{ \Id_{F(N)} }, \quad \mbox{by \eqref{eqn:pdm39} } 
	\end{split}\]
	Next we utilize the definition of $Q_N$ to get 
	\[\begin{split}
		\partial_{N+1} P^{X}_{N+1} (\sigma) &= \paran{ \sigma \times F(1) } \circ \SqBrack{ \Chain\paran{ \lambda^{F(N)}_1 }_{N} - \Chain\paran{ \lambda^{F(N)}_0 }_{N} - P^{F(N)}_{N-1} \partial_{N} } \paran{ \Id_{F(N)} } , \\
		&= \paran{ \sigma \times F(1) } \circ \lambda^{F(N)}_1 - \paran{ \sigma \times F(1) } \circ \lambda^{F(N)}_0 - \paran{ \sigma \times F(1) } \circ  P^{F(N)}_{N-1} \partial_{N} \paran{ \Id_{F(N)} } ,\\
		&= \paran{ \sigma \times F(1) } \circ \lambda^{F(N)}_1 - \paran{ \sigma \times F(1) } \circ \lambda^{F(N)}_0 - P^{F(N)}_{N-1} \circ \sigma \circ  \partial_{N} \paran{ \Id_{F(N)} }, 
	\end{split}\]
	where the last equality follows from the inductive assumption \eqref{eqn:P:5b} for $n=N$. We now proceed by simplifying the first two terms on the LHS using \eqref{eqn:sl5g} :
	\[\begin{split}
		\partial_{N+1} P^{X}_{N+1} (\sigma) &= \lambda^{X}_1 \circ \sigma  - \lambda^{X}_0 \circ \sigma - P^{F(N)}_{N-1} \circ \sigma \circ  \partial_{N} \paran{ \Id_{F(N)} } \\
		&= \lambda^{X}_1 \circ \sigma  - \lambda^{X}_0 \circ \sigma - P^{F(N)}_{N-1} \circ \partial_{N} \sigma \circ \paran{ \Id_{F(N)} }, \quad \mbox{by \eqref{eqn:chain:1} } \\
		&= \SqBrack{ \lambda^{X}_1 - \lambda^{X}_0 - P^{F(N)}_{N-1} } \paran{ \sigma } ,
	\end{split}\]
	and thus \eqref{eqn:P:5a} is verified for $n=N+1$. It remains to verify \eqref{eqn:P:5b} for $n=N+1$. That involves testing the commutation loop on an $N$-simplex $\tau : F(n) \to F(n)$. The CW path gives
	\[ \paran{\sigma \times F(1) } \circ P^{F(N)}_N (\tau) = \paran{\sigma \times F(1) } \circ \paran{ \tau \times F(1) } \circ \beta_{N+1} = \paran{ \sigma \tau \times F(1) } \circ \beta_{N+1} .\]
	The CCW path gives
	\[ P^{X}_N \paran{ \sigma\circ \tau  } = \paran{ \sigma \tau \times F(1) } \circ \beta_{N+1} . \]
	Thus the two paths are equal, and \eqref{eqn:P:5b} stands verified for $n=N+1$. This completes the inductive reasoning, and completes the proof of Lemma \ref{lem:P2_to_P5}. \qed 
	

\end{document}